\newtheorem{theorem}{Theorem}[section]
\newtheorem{lemma}[theorem]{Lemma}
\newtheorem{cor}[theorem]{Corollary}
\theoremstyle{definition}
\theoremstyle{remark}
\newtheorem{remark}[theorem]{\bf{Remark}}
\numberwithin{equation}{section}
\begin{document}

\title [Euclidean operator radius inequalities of a pair operators] {\small {Euclidean operator radius inequalities of a pair of bounded linear operators and their applications}}

\author[S. Jana P. Bhunia and K. Paul] {Suvendu Jana$^1$, Pintu Bhunia$^2$ \MakeLowercase{and} Kallol Paul$^2$}

\address{$^1$ Department of Mathematics, Mahisadal Girls College, Purba Medinipur 721628, West Bengal, India}
\email{janasuva8@gmail.com}

\address{$^2$ Department of Mathematics, Jadavpur University, Kolkata 700032, West Bengal, India}
\email{pintubhunia5206@gmail.com}
\email{kalloldada@gmail.com}

\thanks{Pintu Bhunia would like to thank UGC, Govt. of India for the financial support in the form of SRF under the mentorship of Prof. Kallol Paul. }

\renewcommand{\subjclassname}{\textup{2020} Mathematics Subject Classification}\subjclass[]{Primary 47A12, Secondary 15A60, 47A30, 47A50}
\keywords{Euclidean operator radius; Numerical radius, Operator norm, Cartesian decomposition, Bounded linear operator}

\maketitle

\begin{abstract}
	We obtain several sharp lower and upper bounds for the Euclidean operator radius of a pair  of bounded linear operators defined on a complex Hilbert space. As applications of these bounds we deduce a chain of  new bounds for the classical numerical radius of a bounded linear operator which improve on the existing ones. In particular, we prove that for a bounded linear operator $A,$
	 \[\frac{1}{4} \|A^*A+AA^*\|+\frac{\mu}{2}\max \{\|\Re(A)\|,\|\Im(A)\|\}
	\leq  w^2(A) \, \leq \, w^2( |\Re(A)| +i |\Im(A)|),\]
	where  $\mu= \big| \|\Re(A)+\Im(A)\|-\|\Re(A)-\Im(A)\|\big|.$ This improve the existing upper and lower bounds  of the numerical radius, namely, 
	 \[ \frac14 \|A^*A+AA^*\|\leq w^2(A) \leq \frac12 \|A^*A+AA^*\|. \]
\end{abstract}

\section{Introduction}

Let $\mathscr{H}$ be a complex Hilbert space with inner product $\langle \cdot,\cdot \rangle $ and the norm $\|\cdot\|$ induced by the inner product. Let $ \mathbb{B}(\mathscr{H})$ denote the $C^*$-algebra of all bounded linear operators on $\mathscr{H}.$  For $A\in \mathbb{B}(\mathscr{H}),$ $A^*$ denotes the adjoint of $A$ and  $|A|=({A^*A})^{\frac{1}{2}}$ denotes the positive square root of $A.$  The real part and imaginary part of $A,$ denoted by  $ \Re (A)$ and $\Im(A),$    are defined as $\Re(A)=\frac{1}{2}(A+A^*)$ and $\Im(A)=\frac{1}{2\rm i}(A-A^*)$ respectively. The numerical range of $A$, denoted by $W(A)$, is defined as $W(A)=\left \{\langle Ax,x \rangle: x\in \mathscr{H}, \|x\|=1 \right \}.$
We denote by $\|A\|$, $ c(A) $ and $w(A)$ the operator norm, the Crawford number and the numerical radius of $A$, respectively. Note  that $$c(A)=\inf \left \{|\langle Ax,x \rangle|: x\in \mathscr{H}, \|x\|=1 \right \}$$ and $$w(A)=\sup \left \{|\langle Ax,x \rangle|: x\in \mathscr{H}, \|x\|=1 \right \}.$$ 
It is well known that the numerical radius $ w(\cdot)$ defines a norm on $\mathbb{B}(\mathscr{H})$ and is equivalent to the operator norm $\|\cdot\|$. In fact, the following double inequality holds:
\begin{eqnarray}\label{eqv}
\frac{1}{2} \|A\|\leq w({A})\leq\|A\|.
\end{eqnarray}
The inequalities in (\ref{eqv}) are sharp. The first inequality becomes equality if $A^2=0$, and the second one turns into equality if $A$ is normal. Kittaneh \cite{E} improved the inequalities in (\ref{eqv}) by establishing that 
\begin{eqnarray}\label{k5}
\frac{1}{4}\|A^*A+A{A}^*\|\leq w^2({A})\leq\frac{1}{2}\|A^*A+A{A}^*\|.
\label{d}\end{eqnarray}
For further improvements of \eqref{eqv} and \eqref{k5} we refer the interested readers to \cite{MIA20,AM21,LAA21,RIM21,BSM21,STU03,STU07}.
Let $B,C\in \mathbb{B}(\mathscr{H})$. The Euclidean operator radius of $B$ and $C$, denoted by $w_e(B,C),$ is defined as $$w_e(B,C)= \sup \left \{ \sqrt{|\langle B x,x\rangle|^2+|\langle C x,x\rangle|^2} : x\in \mathscr{H}, \|x\|=1 \right \}.$$ 
Following \cite{P}, $w_e(\,,\,): \mathbb{B}^2(\mathscr{H})\to [0,\infty]$ is a norm that satisfies the inequality
 \begin{eqnarray}
\frac{\sqrt{2}}{4}\|B^*B+C^*C\|^{\frac{1}{2}}\leq w_e(B,C)\leq\|B^*B+C^*C\|^{\frac{1}{2}}.
\label{eqn1}\end{eqnarray}
The constants $\frac{\sqrt{2}}{4}$ and $1$ are best possible in \eqref{eqn1}.
 If $B$ and $C$ are self-adjoint operators, then $(\ref{eqn1})$ becomes 
 \begin{eqnarray}
\frac{\sqrt{2}}{4}\|B^2+C^2\|^{\frac{1}{2}}\leq w_e(B,C)\leq\|B^2+C^2\|^{\frac{1}{2}}.
\label{eqn2}\end{eqnarray}
We note that for self-adjoint operators $B$ and $C$, $w_e(B,C)=w(B+i C),$ its proof follows easily from the definition of $w_e(B,C)$.
In \cite[Th. 1]{D}, Dragomir proved that if  $B,C\in \mathbb{B}(\mathscr{H})$, then
\begin{eqnarray}\label{D06}
\frac12 w(B^2+C^2)	\leq w^2_e(B,C)\leq\|B^*B+C^*C\|,
\end{eqnarray}
 where the constant $\frac12$ is best possible in the sense that it cannot be replaced by a larger constant.  For further extension of Euclidean operator radius and related inequalities we refer to \cite{SMS, SAH21}.

In this article, we develop several lower and upper bounds for the Euclidean operator radius of a pair $(B,C)$ of bounded linear operators defined on $\mathscr{H}$ which generalize and improve on the existing bounds \eqref{eqn1} and \eqref{D06}. Applying the Euclidean operator radius bounds obtained here, we obtain new lower and upper bounds for the numerical radius of a bounded linear operator which improve on the bounds in \eqref{k5}.

\section{Main Results}

To proof our first theorem we need the following lemma.\begin{lemma}\cite{a1}(Cauchy-Schwarz inequality)
 If $A\in\mathbb{B}(\mathscr{H})$ and $ 0\leq\alpha\leq1$, then $$ |\langle Ax,y\rangle|^2\leq\langle|A|^{2\alpha} x,x\rangle\langle|A^*|^{2(1-\alpha)}y,y\rangle$$ for all $x,y\in\mathscr{H}$.

\label{lem1}\end{lemma}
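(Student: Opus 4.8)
The plan is to reduce the claimed estimate to the ordinary Cauchy--Schwarz inequality in $\mathscr{H}$ by exploiting the polar decomposition of $A$; this is the classical mixed Schwarz inequality of Kato, and the route below is the standard self-contained one (one could of course simply invoke \cite{a1}). First I would dispose of the endpoint $\alpha=1$ separately: there the asserted bound reads $|\langle Ax,y\rangle|^2\le\langle|A|^2x,x\rangle\,\|y\|^2=\|Ax\|^2\|y\|^2$, which is immediate from the ordinary Cauchy--Schwarz inequality. So from now on assume $0\le\alpha<1$.

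Write the polar decomposition $A=U|A|$, where $U$ is the partial isometry with initial space $\overline{\mathrm{ran}\,|A|}$ and final space $\overline{\mathrm{ran}\,A}$, so that $U^*U$ is the orthogonal projection onto $\overline{\mathrm{ran}\,|A|}$. I would recall the standard facts that $|A^*|=U|A|U^*$ and, more generally, $U\,|A|^{s}\,U^*=|A^*|^{s}$ for every $s>0$ (prove the latter by approximating $t\mapsto t^{s}$ uniformly on the spectrum of $|A|$ by polynomials with zero constant term and using that $U^*U$ acts as the identity on $\mathrm{ran}\,|A|$, hence on $\mathrm{ran}\,|A|^{n}$ for every $n\ge1$). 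Then, using the functional-calculus identity $|A|=|A|^{1-\alpha}|A|^{\alpha}$ and self-adjointness of $|A|^{1-\alpha}$, I would factor
\[
\langle Ax,y\rangle=\big\langle U|A|^{1-\alpha}|A|^{\alpha}x,\;y\big\rangle=\big\langle |A|^{\alpha}x,\;|A|^{1-\alpha}U^*y\big\rangle .
\]
Applying the ordinary Cauchy--Schwarz inequality to this inner product and then the identity $U|A|^{2(1-\alpha)}U^*=|A^*|^{2(1-\alpha)}$ (legitimate since $2(1-\alpha)>0$) yields
\[
|\langle Ax,y\rangle|^2\le\big\||A|^{\alpha}x\big\|^2\,\big\||A|^{1-\alpha}U^*y\big\|^2=\langle|A|^{2\alpha}x,x\rangle\,\big\langle U|A|^{2(1-\alpha)}U^*y,\,y\big\rangle=\langle|A|^{2\alpha}x,x\rangle\,\langle|A^*|^{2(1-\alpha)}y,y\rangle ,
\]
which is exactly the asserted inequality.

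The only genuinely delicate point is the operator identity $U|A|^{s}U^*=|A^*|^{s}$ in the general infinite-dimensional setting, where $U$ is merely a partial isometry rather than a unitary, so one must check carefully that $U^*U$ is absorbed when passing through powers of $|A|$; everything else is a one-line application of Cauchy--Schwarz together with elementary continuous functional calculus. (In finite dimensions one may take $U$ unitary and this step becomes transparent.)
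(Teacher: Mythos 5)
Your argument is correct: the factorization $\langle Ax,y\rangle=\langle |A|^{\alpha}x,\,|A|^{1-\alpha}U^{*}y\rangle$ from the polar decomposition, the ordinary Cauchy--Schwarz inequality, and the identity $U|A|^{s}U^{*}=|A^{*}|^{s}$ for $s>0$ (justified exactly as you indicate, by polynomials with zero constant term and the fact that $U^{*}U$ fixes $\mathrm{ran}\,|A|$) yield the stated bound, and you rightly isolate $\alpha=1$, where $UU^{*}\neq I$ in general would invalidate the last step. The paper itself gives no proof of this lemma --- it is Kato's mixed Schwarz inequality, quoted from Kittaneh \cite{a1} --- and your argument is the standard proof of that cited result, so there is no in-paper proof to compare it against.
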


\begin{theorem}\label{th1} Let  $B,C \in\mathbb{B}(\mathscr{H})$, then 
	\begin{eqnarray*}
&& \frac{1}{2} w(B^2+C^2)+\frac{1}{2}\max \{w(B),w(C)\}\big| w(B+C)-w(B-C)\big|\\
&& \leq w_e^2(B,C)\leq  \min \left \{ w(|B|+i|C|)w(|B^*|+i|C^*|), w(|B|+i|C^*|)w(|B^*|+i|C|) \right \}.
\end{eqnarray*}
\end{theorem}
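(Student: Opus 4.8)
\emph{Upper bound.} The plan is to start from Lemma~\ref{lem1} with $\alpha=\tfrac12$, which gives, for every unit vector $x\in\mathscr{H}$,
\[|\langle Bx,x\rangle|^2\le\langle|B|x,x\rangle\,\langle|B^*|x,x\rangle,\qquad |\langle Cx,x\rangle|^2\le\langle|C|x,x\rangle\,\langle|C^*|x,x\rangle.\]
Adding these two inequalities and then applying the scalar Cauchy--Schwarz inequality $a_1b_1+a_2b_2\le\sqrt{a_1^2+a_2^2}\,\sqrt{b_1^2+b_2^2}$ to the pairs $\big(\langle|B|x,x\rangle,\langle|C|x,x\rangle\big)$ and $\big(\langle|B^*|x,x\rangle,\langle|C^*|x,x\rangle\big)$ bounds $|\langle Bx,x\rangle|^2+|\langle Cx,x\rangle|^2$ by $\sqrt{\langle|B|x,x\rangle^2+\langle|C|x,x\rangle^2}\,\sqrt{\langle|B^*|x,x\rangle^2+\langle|C^*|x,x\rangle^2}$. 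Since $|B|,|C|,|B^*|,|C^*|$ are self-adjoint, $\langle|B|x,x\rangle^2+\langle|C|x,x\rangle^2=|\langle(|B|+i|C|)x,x\rangle|^2\le w^2(|B|+i|C|)$, and similarly the other factor is at most $w^2(|B^*|+i|C^*|)$; taking the supremum over unit $x$ gives $w_e^2(B,C)\le w(|B|+i|C|)\,w(|B^*|+i|C^*|)$. Repeating the argument but pairing $\langle|B|x,x\rangle$ with $\langle|C^*|x,x\rangle$ and $\langle|B^*|x,x\rangle$ with $\langle|C|x,x\rangle$ yields the other term in the minimum.

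\emph{Lower bound.} This is the heart of the matter, and I would build it from three ingredients. \emph{(i)} For complex scalars, $|z|^2+|w|^2\ge\tfrac12|z\pm w|^2$; applied pointwise with $z=\langle Bx,x\rangle$, $w=\langle Cx,x\rangle$ and followed by a supremum, this gives $w_e^2(B,C)\ge\tfrac12\max\{w^2(B+C),w^2(B-C)\}$. \emph{(ii)} From $B^2+C^2=\tfrac12\big((B+C)^2+(B-C)^2\big)$, the triangle inequality for the norm $w(\cdot)$, and the power inequality $w(T^2)\le w^2(T)$, one gets $\tfrac12\,w(B^2+C^2)\le\tfrac14\big(w^2(B+C)+w^2(B-C)\big)$. \emph{(iii)} Again because $w(\cdot)$ is a norm, $w(B+C)+w(B-C)\ge w(2B)=2w(B)$ and likewise $\ge 2w(C)$, hence $w(B+C)+w(B-C)\ge 2\max\{w(B),w(C)\}$.

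Write $r=w(B+C)$, $s=w(B-C)$, $m=\max\{w(B),w(C)\}$, so $r+s\ge 2m$ by (iii). Assuming $r\ge s$ without loss of generality, the trivial inequality $(r-s)(r+s-2m)\ge 0$ rearranges to $\tfrac12\max\{r,s\}^2\ge\tfrac14(r^2+s^2)+\tfrac12|r-s|\,m$. Chaining this with (i),
\[w_e^2(B,C)\ \ge\ \tfrac12\max\{r,s\}^2\ \ge\ \tfrac14(r^2+s^2)+\tfrac12\,|r-s|\,m,\]
and since (ii) says $\tfrac14(r^2+s^2)\ge\tfrac12 w(B^2+C^2)$, the right-hand side is at least $\tfrac12 w(B^2+C^2)+\tfrac12\,|r-s|\,m$, which is exactly the asserted lower bound.

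\emph{Main obstacle.} The nontrivial point is not any single estimate but the logic of the lower bound: one must not try to \emph{add} the two natural lower bounds $\tfrac12 w(B^2+C^2)$ and the $\mu$-type term, since both estimate the same scalar $w_e^2(B,C)$. Instead one isolates a single strong enough bound, namely $\tfrac12\max\{w^2(B+C),w^2(B-C)\}$, and checks that it already dominates the desired \emph{sum}. Getting the constants to fit (the $\tfrac14$ against $\tfrac12$, and the emergence of $\max\{w(B),w(C)\}$ rather than $\tfrac12\big(w(B+C)+w(B-C)\big)$) is exactly what forces the use of (iii); everything else is routine manipulation.
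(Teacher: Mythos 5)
Your proposal is correct and follows essentially the same route as the paper: the upper bound uses Lemma~\ref{lem1} with $\alpha=\tfrac12$ followed by the scalar Cauchy--Schwarz inequality and the identification $\langle|B|x,x\rangle^2+\langle|C|x,x\rangle^2=|\langle(|B|+i|C|)x,x\rangle|^2$, while the lower bound combines $w_e^2(B,C)\ge\tfrac12\max\{w^2(B+C),w^2(B-C)\}$ with the power inequality, the triangle inequality for $w(\cdot)$, and $w(B+C)+w(B-C)\ge 2\max\{w(B),w(C)\}$, exactly as in the paper. The only cosmetic differences are that you obtain the second term in the minimum by re-pairing the factors in Cauchy--Schwarz instead of substituting $C\mapsto C^*$, and you package the paper's $\max\{a,b\}=\tfrac{a+b}{2}+\tfrac{|a-b|}{2}$ manipulation as the equivalent inequality $(r-s)(r+s-2m)\ge 0$.
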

\begin{proof}
Let $x$ be an unit vector in $\mathscr{H}$. Then we have \begin{eqnarray*}
|\left\langle Bx,x\right\rangle|^2+|\left\langle Cx,x\right\rangle|^2&\geq& \frac{1}{2}\left(|\left\langle Bx,x\right\rangle|+|\left\langle Cx,x\right\rangle|\right)^2\\&\geq&\frac{1}{2}\left(|\left\langle Bx,x\right\rangle \pm \left\langle Cx,x\right\rangle|\right)^2\\&=&\frac{1}{2}|\left\langle (B\pm C)x,x\right\rangle|^2.
\end{eqnarray*}
Taking supremum over all $x$ in $\mathscr{H}$, $\|x\|=1$, we get \begin{eqnarray}
w_e^2(B,C)\geq\frac{1}{2} w^2(B\pm C).
\label{eq1}\end{eqnarray}
Therefore, it follows from the inequalities in (\ref{eq1}) that
 \begin{eqnarray*}
 w_e^2(B,C)&\geq&\frac{1}{2}\max \{w^2(B+C), w^2(B-C)\}\\&=& \frac{w^2(B+C)+w^2(B-C)}{4}+\frac{\big|w^2(B+C)-w^2(B-C)\big|}{4}\\&\geq& \frac{w((B+C)^2)+w((B-C)^2)}{4}\\
 && +(w(B+C)+w(B-C))\frac{\big|w(B+C)-w(B-C)\big|}{4}\\&\geq& \frac{w((B+C)^2+(B-C)^2)}{4}\\
 && +(w(B+C)+(B-C))\frac{\big|w(B+C)-w(B-C)\big|}{4}.	 \end{eqnarray*}
Therefore,
\begin{eqnarray}\label{p1}
	w_e^2(B,C)&\geq&\frac{w(B^2+C^2)}{2}+\frac{w(B)}{2}\big|w(B+C)-w(B-C)\big|.
\end{eqnarray}
Interchanging $B$ and $C$ in \eqref{p1}, we have that 
 \begin{eqnarray}\label{p2} w_e^2(B,C)&\geq&\frac{w(B^2+C^2)}{2}+\frac{w(C)}{2}\big|w(B+C)-w(B-C)\big|.
\end{eqnarray} 
Therefore, the desired first inequality follows by combining the inequalities in \eqref{p1} and \eqref{p2}.
 Now we prove the second inequality.
 Let $x\in\mathscr{H}$ with $\|x\|=1$.
 \begin{eqnarray*}
&& |\left\langle Bx,x\right\rangle|^2+|\left\langle Cx,x\right\rangle|^2\\
&\leq& \left\langle |B|x,x\right\rangle\left\langle |B^*|x,x\right\rangle+\left\langle |C|x,x\right\rangle\left\langle |C^*|x,x\right\rangle\,\,\,(\textit{using Lemma \ref{lem1}})\\&\leq& \left\lbrace (\left\langle |B|x,x\right\rangle^2+\left\langle |C|x,x\right\rangle^2)(\left\langle |B^*|x,x\right\rangle^2+\left\langle |C^*|x,x\right\rangle^2)\right\rbrace^{\frac{1}{2}}\\
  	&&\,\,\, (\textit{by the inequality $(ab+cd)^2\leq (a^2+c^2)(b^2+d^2)$ for $a,b,c,d\in \mathbb{R}$})\\&=&\left\lbrace |\left\langle |B|x,x\right\rangle+i\left\langle |C|x,x\right\rangle|^2|\left\langle |B^*|x,x\right\rangle+i\left\langle |C^*|x,x\right\rangle|^2\right\rbrace^{\frac{1}{2}}\\&=&\left\lbrace |\left\langle( |B|+i |C|)x,x\right\rangle|^2|\left\langle (|B^*|+i |C^*|)x,x\right\rangle|^2\right\rbrace^{\frac{1}{2}}\\&\leq& w( |B|+i |C|)w(|B^*|+i |C^*|).
 \end{eqnarray*}
 Taking supremum over all $x$ in $\mathscr{H}$ with $\|x\|=1$, we get
\begin{eqnarray}\label{p3}
	w_e^2(B,C)&\leq& w(|B|+i|C|)w(|B^*|+i|C^*|).
\end{eqnarray}
Replacing $C$ by $C^*$ in \eqref{p3}, we have
\begin{eqnarray}\label{p4}
	w_e^2(B,C) &\leq & w(|B|+i|C^*|)w(|B^*|+i|C|).
\end{eqnarray}
Therefore, combining the inequalities in \eqref{p3} and \eqref{p4} we obtain the desired second inequality.
 
\end{proof}

\begin{remark}
(i) Clearly, the lower bound of $w_e(B,C)$ obtained in Theorem \ref{th1} is stronger than the lower bound in \eqref{D06}.	
Also, it is not difficult to verify that $ w^2(|B|+i|C|)\leq\|B^*B+C^*C\|$ and $ w^2(|B^*|+i|C^*|)\leq\|BB^*+CC^*\|.$ Therefore, \begin{eqnarray*}
 w(|B|+i|C|)w(|B^*|+i|C^*|)&\leq&\|B^*B+C^*C\|^{\frac{1}{2}}\|BB^*+CC^*\|^{\frac{1}{2}}.\end{eqnarray*}
Similarly,
\begin{eqnarray*}
	w(|B|+i|C^*|)w(|B^*|+i|C|)&\leq&\|B^*B+CC^*\|^{\frac{1}{2}}\|BB^*+C^*C\|^{\frac{1}{2}}.\end{eqnarray*}
Therefore, it follows from the second inequality in Theorem \ref{th1} that 
\begin{eqnarray*}
w^2_e(B,C) &\leq & \min \left\{ \|B^*B+C^*C\|^{\frac{1}{2}}\|BB^*+CC^*\|^{\frac{1}{2}},  \|B^*B+CC^*\|^{\frac{1}{2}}\|BB^*+C^*C\|^{\frac{1}{2}} \right\}. 
\end{eqnarray*}
The above bound for $w_e(B,C)$ is better than the upper bound in \eqref{eqn1} if $\|BB^*+CC^*\| \leq \|B^*B+C^*C\|.$ \\
(ii) Following Theorem \ref{th1},  $w^2_e(B,C)=\frac12 w(B^2+C^2)$ implies $w(B+C)=w(B-C).$ By considering $C=0$, we conclude that the converse part does not always hold.

 \end{remark}
 
 The following corollary is an immediate consequence of Theorem \ref{th1}  assuming $B$ and $C$ to be  self-adjoint operators.
\begin{cor}
	Let $ B,C\in\mathbb{B}(\mathscr{H})$ be self-adjoint, then
\begin{eqnarray}
\frac{1}{2} \|B^2+C^2\|+\frac{1}{2}\max \{\|B\|,\|C\|\}\big| \|B+C\|-\|B-C\|\big| \nonumber\\ \leq w_e^2(B,C)\leq w^2(|B|+i|C|).
\label{eq2}\end{eqnarray}
\end{cor}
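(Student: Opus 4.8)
The plan is to derive the corollary directly from Theorem \ref{th1} by specializing $B$ and $C$ to self-adjoint operators and using the two well-known identities that relate the numerical radius to the operator norm in this setting. First I would recall that for a self-adjoint operator $S$ one has $w(S)=\|S\|$, and more generally that for self-adjoint $B,C$ the operator $B\pm C$ is self-adjoint, so $w(B\pm C)=\|B\pm C\|$, $w(B)=\|B\|$, $w(C)=\|C\|$, and $w(B^2+C^2)=\|B^2+C^2\|$ since $B^2+C^2$ is self-adjoint (indeed positive). Substituting these equalities into the lower bound of Theorem \ref{th1} immediately converts
\[
\tfrac12 w(B^2+C^2)+\tfrac12\max\{w(B),w(C)\}\,\big|w(B+C)-w(B-C)\big|
\]
into
\[
\tfrac12\|B^2+C^2\|+\tfrac12\max\{\|B\|,\|C\|\}\,\big|\,\|B+C\|-\|B-C\|\,\big|,
\]
which is exactly the claimed lower bound in \eqref{eq2}.

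For the upper bound I would again invoke Theorem \ref{th1}, whose second inequality reads $w_e^2(B,C)\le\min\{w(|B|+i|C|)w(|B^*|+i|C^*|),\,w(|B|+i|C^*|)w(|B^*|+i|C|)\}$. When $B$ and $C$ are self-adjoint, $B^*=B$ and $C^*=C$, so $|B^*|=|B|$ and $|C^*|=|C|$, and both entries of the minimum collapse to the single quantity $w(|B|+i|C|)\cdot w(|B|+i|C|)=w^2(|B|+i|C|)$. Hence $w_e^2(B,C)\le w^2(|B|+i|C|)$, which is the right-hand side of \eqref{eq2}. Chaining the two bounds gives the full double inequality.

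There is essentially no obstacle here: the corollary is a pure specialization, and the only thing to be careful about is justifying each replacement $w(\cdot)=\|\cdot\|$, which rests on the elementary fact that the numerical radius of a self-adjoint operator equals its operator norm (equivalently, the spectral radius), together with the observation that sums, differences, and the quadratic expression $B^2+C^2$ of self-adjoint operators remain self-adjoint. I would state these remarks in one sentence and then simply write "applying Theorem \ref{th1}" to conclude. If one wished to be thorough, one could add that $w_e(B,C)=w(B+iC)$ for self-adjoint $B,C$ (noted in the introduction), so \eqref{eq2} can also be read as a pair of bounds for $w^2(B+iC)$, but this is not needed for the proof itself.
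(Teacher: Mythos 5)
Your proposal is correct and follows exactly the route the paper intends: the corollary is stated as an immediate consequence of Theorem \ref{th1}, obtained by using $w(S)=\|S\|$ for the self-adjoint operators $B$, $C$, $B\pm C$, $B^2+C^2$, and the identities $|B^*|=|B|$, $|C^*|=|C|$, which collapse the minimum in the upper bound to $w^2(|B|+i|C|)$. Nothing is missing.
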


Note that the second inequality in (\ref{eq2}) gives better bound than that in (\ref{eqn2}).
In particular, by considering $ B=\Re(A)$ and $ C=\Im(A)$  in (\ref{eq2}) we obtain the following new upper and lower bounds for the numerical radius of a bounded linear operator $A$.  
\begin{cor} \label{pcor}
	Let $A\in \mathbb{B}(\mathscr{H})$, then
\begin{eqnarray*}
\frac{1}{4} \|A^*A+AA^*\|+\frac{1}{2}\max \{\|\Re(A)\|,\|\Im(A)\|\}\big| \|\Re(A)+\Im(A)\|-\|\Re(A)-\Im(A)\|\big| \\
\leq w^2(A) \leq w^2( |\Re(A)| +i |\Im(A)|).
\end{eqnarray*}
\end{cor}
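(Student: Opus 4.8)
The plan is to derive Corollary \ref{pcor} directly from the self-adjoint Corollary preceding it by the substitution $B=\Re(A)$, $C=\Im(A)$, and then translate every term back into the language of $A$ itself. Since $\Re(A)$ and $\Im(A)$ are self-adjoint, inequality \eqref{eq2} applies verbatim, so the whole task reduces to four identity/observation checks.

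First I would record the Cartesian decomposition identity $w_e(\Re(A),\Im(A))=w(\Re(A)+i\Im(A))=w(A)$, which is exactly the remark made earlier in the excerpt that for self-adjoint $B,C$ one has $w_e(B,C)=w(B+iC)$, together with $A=\Re(A)+i\Im(A)$. This turns the middle term $w_e^2(B,C)$ into $w^2(A)$. Next, for the $\|B^2+C^2\|$ term I would use $\Re(A)^2+\Im(A)^2=\tfrac14\big((A+A^*)^2-(A-A^*)^2\big)=\tfrac12(A^*A+AA^*)$, a one-line expansion, so $\tfrac12\|B^2+C^2\|=\tfrac14\|A^*A+AA^*\|$, matching the first term on the left of the claimed inequality. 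The remaining terms $\max\{\|\Re(A)\|,\|\Im(A)\|\}$ and $\big|\,\|\Re(A)+\Im(A)\|-\|\Re(A)-\Im(A)\|\,\big|$ and the upper bound $w^2(|\Re(A)|+i|\Im(A)|)$ are literally the corresponding terms of \eqref{eq2} under the substitution, so no further manipulation is needed there.

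Putting these substitutions into \eqref{eq2} yields
\begin{eqnarray*}
\frac14\|A^*A+AA^*\|+\frac12\max\{\|\Re(A)\|,\|\Im(A)\|\}\big|\,\|\Re(A)+\Im(A)\|-\|\Re(A)-\Im(A)\|\,\big|\\
\leq w^2(A)\leq w^2(|\Re(A)|+i|\Im(A)|),
\end{eqnarray*}
which is exactly the assertion. No genuine obstacle arises: the only point requiring a moment's care is verifying the algebraic identity $\Re(A)^2+\Im(A)^2=\tfrac12(A^*A+AA^*)$ (the cross terms $AA^*$ and $A^*A$ survive with the right coefficients while the $A^2$ and $(A^*)^2$ terms cancel), and noting that the earlier corollary's hypothesis is met because $\Re(A)$ and $\Im(A)$ are always self-adjoint. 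Hence the proof is essentially a substitution argument, and I would present it as such in two or three lines.
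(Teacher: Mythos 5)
Your proposal is correct and follows exactly the paper's route: the authors obtain Corollary \ref{pcor} precisely by putting $B=\Re(A)$, $C=\Im(A)$ in \eqref{eq2}, with the identities $w_e(\Re(A),\Im(A))=w(A)$ and $\Re(A)^2+\Im(A)^2=\tfrac12(A^*A+AA^*)$ doing the translation, which you verify explicitly (the paper leaves them implicit).
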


\begin{remark}
  Clearly, $w^2( |\Re(A)| +i |\Im(A)|)\leq \frac12 \|A^*A+AA^*\|.$ Therefore, the inequality in Corollary \ref{pcor} is stronger then the inequality (\ref{d}). Also, $\frac12 \|A^*A+AA^*\| \leq \| \Re(A) \|^2+\| \Im (A)\|^2$. So, the upper bound for $w(A)$ in Corollary \ref{pcor} is stronger than the well-known bound $w(A)\leq \sqrt{\| \Re(A)\|^2+\| \Im(A) \|^2}.$
\end{remark}

Again, if we consider $B= A$ and $C= A^*$ in Theorem \ref{th1}, we get the following inequality.
\begin{cor}\label{cor1}
	Let $A\in \mathbb{B}(\mathscr{H}),$ then
 	\begin{eqnarray*}
		\frac{1}{2}\|\Re(A^2)\|+\frac{1}{2}w(A)\big| \|\Re(A)\|-\|\Im(A)\|\big| \leq w^2(A) \leq \frac{1}{2}w(|A|+i|A^*|)w(|A^*|+i|A|).
	\end{eqnarray*}
\end{cor}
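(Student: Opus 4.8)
The plan is to apply Theorem~\ref{th1} verbatim with the substitution $B=A$ and $C=A^*$, and then simplify each of the quantities that appears. The single fact that makes everything work is the identity $w_e^2(A,A^*)=2\,w^2(A)$: indeed, since $\langle A^*x,x\rangle=\overline{\langle Ax,x\rangle}$ we have $|\langle A^*x,x\rangle|=|\langle Ax,x\rangle|$ for every $x\in\mathscr{H}$, hence $|\langle Ax,x\rangle|^2+|\langle A^*x,x\rangle|^2=2|\langle Ax,x\rangle|^2$, and taking the supremum over unit vectors gives the claim. So the first step is to record this identity.

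Next I would evaluate the three ingredients of the lower bound in Theorem~\ref{th1} for this choice of $B,C$. From $(A^*)^2=(A^2)^*$ we get $B^2+C^2=A^2+(A^2)^*=2\Re(A^2)$, and since $\Re(A^2)$ is self-adjoint, $w(B^2+C^2)=2\|\Re(A^2)\|$. Likewise $B+C=2\Re(A)$ and $B-C=2\mathrm{i}\,\Im(A)$, so using $w(\lambda T)=|\lambda|\,w(T)$ together with $w(T)=\|T\|$ for self-adjoint $T$ we obtain $w(B+C)=2\|\Re(A)\|$ and $w(B-C)=2\|\Im(A)\|$; also $\max\{w(B),w(C)\}=\max\{w(A),w(A^*)\}=w(A)$. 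Substituting these values, and the identity $w_e^2(A,A^*)=2\,w^2(A)$, into the left inequality of Theorem~\ref{th1} and then dividing through by $2$ produces exactly the stated lower bound $\tfrac12\|\Re(A^2)\|+\tfrac12 w(A)\big|\,\|\Re(A)\|-\|\Im(A)\|\,\big|\le w^2(A)$.

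For the upper bound I would note that with $C=A^*$ one has $|C|=|A^*|$ and $|C^*|=|A|$, so the first entry of the minimum in Theorem~\ref{th1} becomes $w(|A|+\mathrm{i}|A^*|)\,w(|A^*|+\mathrm{i}|A|)$; since the left-hand side there is $w_e^2(A,A^*)=2\,w^2(A)$, dividing by $2$ gives $w^2(A)\le\tfrac12 w(|A|+\mathrm{i}|A^*|)\,w(|A^*|+\mathrm{i}|A|)$, which is the asserted upper bound. (The second entry of the minimum simplifies to $2\|A\|^2$, a weaker estimate, and is simply discarded.) There is no real obstacle in this argument: the corollary is a direct specialization of Theorem~\ref{th1}, and the only point needing a moment's attention is the identity $w_e(A,A^*)=\sqrt{2}\,w(A)$, after which the proof is routine bookkeeping with the standard facts $w(A)=w(A^*)$ and $w(T)=\|T\|$ for self-adjoint $T$.
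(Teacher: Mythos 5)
Your proposal is correct and follows the paper's own route exactly: the paper obtains this corollary precisely by putting $B=A$ and $C=A^*$ in Theorem~\ref{th1}, and your verification of the identity $w_e^2(A,A^*)=2w^2(A)$ together with the simplifications $w(A^2+(A^*)^2)=2\|\Re(A^2)\|$, $w(A\pm A^*)=2\|\Re(A)\|,\,2\|\Im(A)\|$, and $|{A^*}^*|=|A|$ is just the bookkeeping the paper leaves implicit. Discarding the second entry of the minimum is legitimate, as you note, so nothing is missing.
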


\begin{remark}
Clearly, the first inequality in Corollary \ref{cor1} is sharper than the existing inequality  $ \frac{1}{2}\|\Re(A^2)\| \leq w^2(A),$ given in \cite[Remark 2]{D}. Observe that  $\frac{1}{2}w(|A|+i|A^*|)w(|A^*|+i|A|) \leq \frac12 \|A^*A+AA^*\|,$	and so 
 the second inequality in Corollary \ref{cor1} is stronger then the second inequality in (\ref{d}).
\end{remark}

Next lower bound for $w_e(B,C)$ reads as follows.

\begin{theorem}
Let $ B,C\in\mathbb{B}(\mathscr{H})$, then $$\frac{1}{2}\max\left\lbrace w^2(B+C)+c^2(B-C),w^2(B-C)+c^2(B+C)\right\rbrace\leq w_e^2(B,C).$$
\label{th2}\end{theorem}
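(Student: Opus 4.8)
The plan is to reuse the elementary pointwise estimate from the proof of Theorem \ref{th1}, but instead of discarding the "cross term'' I will retain a lower bound on $|\langle Cx,x\rangle|$ (respectively $|\langle Bx,x\rangle|$) via the Crawford number. The starting point is the identity $|\langle Bx,x\rangle|^2+|\langle Cx,x\rangle|^2=\tfrac12|\langle (B+C)x,x\rangle|^2+\tfrac12|\langle (B-C)x,x\rangle|^2$, valid for every unit vector $x$; this is just the parallelogram law applied to the two complex numbers $\langle Bx,x\rangle$ and $\langle Cx,x\rangle$, and it is exactly the sharpened version of the chain of inequalities \eqref{eq1} used earlier.

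Next I would bound the two terms asymmetrically. For a fixed unit vector $x$ we trivially have $|\langle (B+C)x,x\rangle|^2\le w^2(B+C)$, but also $|\langle (B-C)x,x\rangle|^2\ge c^2(B-C)$ by definition of the Crawford number. These two pointwise estimates do not hold simultaneously with the same extremal $x$, so I cannot simply add them. Instead, I take the supremum over $x$ of $\tfrac12|\langle(B+C)x,x\rangle|^2+\tfrac12|\langle(B-C)x,x\rangle|^2$; since this expression is $\ge \tfrac12|\langle(B+C)x,x\rangle|^2+\tfrac12 c^2(B-C)$ for every $x$, taking the supremum of the right side gives $w_e^2(B,C)\ge \tfrac12 w^2(B+C)+\tfrac12 c^2(B-C)$. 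Swapping the roles of $B+C$ and $B-C$ in the same argument yields $w_e^2(B,C)\ge \tfrac12 w^2(B-C)+\tfrac12 c^2(B+C)$, and combining the two by taking the maximum gives the stated inequality.

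The only place requiring a little care is the step "$\sup_x\big(\tfrac12|\langle(B+C)x,x\rangle|^2+\tfrac12 c^2(B-C)\big)=\tfrac12 w^2(B+C)+\tfrac12 c^2(B-C)$": this is legitimate because $c^2(B-C)$ is a constant independent of $x$, so it passes through the supremum, and $\sup_x|\langle(B+C)x,x\rangle|^2=w^2(B+C)$ by definition. There is no genuine obstacle here; the subtlety is only to resist the temptation to add the two pointwise bounds before taking the supremum, which would be unjustified. I would present the computation in a single short display, first establishing the parallelogram identity, then inserting the Crawford lower bound on one summand, taking suprema, and finally interchanging $B\pm C$ and writing the maximum.
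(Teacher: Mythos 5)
Your proposal is correct and follows essentially the same route as the paper: the parallelogram identity for $\langle Bx,x\rangle$ and $\langle Cx,x\rangle$, the pointwise Crawford-number bound $|\langle (B-C)x,x\rangle|^2\geq c^2(B-C)$, a supremum over unit vectors, and then swapping the roles of $B+C$ and $B-C$. The only cosmetic difference is that the paper rearranges the inequality to isolate $|\langle (B+C)x,x\rangle|^2\leq 2w_e^2(B,C)-c^2(B-C)$ before taking the supremum, which is equivalent to your handling of the constant term.
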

\begin{proof}
Let $x$ be an unit vector in $\mathscr{H}$. Then we have  
\begin{eqnarray*}|\left\langle Bx,x\right\rangle+\left\langle Cx,x\right\rangle|^2+|\left\langle Bx,x\right\rangle-\left\langle Cx,x\right\rangle|^2=2(|\left\langle Bx,x\right\rangle|^2+|\left\langle Cx,x\right\rangle|^2).\end{eqnarray*}Therefore,\begin{eqnarray*}|\left\langle (B+C)x,x\right\rangle|^2+|\left\langle (B-C)x,x\right\rangle|^2&=&2(|\left\langle Bx,x\right\rangle|^2+|\left\langle Cx,x\right\rangle|^2)\\&\leq& 2w_e^2(B,C).
\end{eqnarray*}Thus,\begin{eqnarray*}
|\left\langle (B+C)x,x\right\rangle|^2&\leq&2w_e^2(B,C)-|\left\langle (B-C)x,x\right\rangle|^2\\&\leq&2w_e^2(B,C)-c^2(B-C). 
\end{eqnarray*}
Taking supremum over all $x$ in $\mathscr{H}$ with $\|x\|=1$, we get $$ w^2(B+C)\leq2w_e^2(B,C)-c^2(B-C),$$ that is, \begin{eqnarray} \label{eq3}
	w^2(B+C)+c^2(B-C)\leq2w_e^2(B,C).
\end{eqnarray}
Similarly, we can prove that 
\begin{eqnarray}\label{eq4}
 w^2(B-C)+c^2(B+C)\leq2w_e^2(B,C).
\end{eqnarray} 
Combining the inequalities (\ref{eq3})  and (\ref{eq4}) we get,   $$\frac{1}{2}\max\left\lbrace w^2(B+C)+c^2(B-C),w^2(B-C)+c^2(B+C)\right\rbrace\leq w_e^2(B,C),$$ as desired.
\end{proof}

\begin{remark}
(i) Clearly, the bound in Theorem \ref{th2} is stronger then the first bound in \cite[Th. 2]{D}, that is, $\frac{1}{2}\max\left\lbrace w^2(B+C),w^2(B-C) \right\rbrace\leq w_e^2(B,C)$.\\
(ii) For self-adjoint operators $B,C \in \mathbb{B}(\mathscr{H})$, the bound in Theorem \ref{th2} is of the form $$\frac{1}{2}\max\left\lbrace \|B+C\|^2+c^2(B-C),\|B-C\|^2+c^2(B+C)\right\rbrace\leq w_e^2(B,C).$$
(iii) Replacing $B$ by $\Re(A)$ and $C$ by $\Im(A)$ in  Theorem \ref{th2} we get the following lower bound for the numerical radius of $A\in \mathbb{B}(\mathscr{H})$:
\begin{eqnarray*}
&& w^2(A) \geq \\
&& \frac{1}{2}\max\left\lbrace \|\Re(A)+\Im(A)\|^2+c^2(\Re(A)-\Im(A)),\|\Re(A)-\Im(A)\|^2+c^2(\Re(A)+\Im(A))\right\rbrace. 
\end{eqnarray*}
This bound appeared recently  in  \cite[Cor. 2.3]{PSMK}.
\end{remark}

We next obtain the following inequality.
\begin{theorem}\label{cor2}
Let $ B,C\in\mathbb{B}(\mathscr{H})$, then $$\max\left\lbrace w^2(B)+c^2(C),w^2(C)+c^2(B)\right\rbrace\leq w_e^2(B,C).$$
\end{theorem}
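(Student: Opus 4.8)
The plan is to mimic, in a simplified form, the argument used in the proof of Theorem \ref{th2}, exploiting only the defining inequality for the Euclidean operator radius together with the definition of the Crawford number. First I would fix an arbitrary unit vector $x \in \mathscr{H}$ and observe that, directly from the definition of $w_e(B,C)$, one has
\begin{eqnarray*}
|\langle Bx,x\rangle|^2 + |\langle Cx,x\rangle|^2 \leq w_e^2(B,C).
\end{eqnarray*}
Rearranging and using $|\langle Cx,x\rangle| \geq c(C)$ gives $|\langle Bx,x\rangle|^2 \leq w_e^2(B,C) - c^2(C)$.

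Next I would take the supremum over all unit vectors $x$ in this last inequality. Since the right-hand side is a constant independent of $x$, this yields $w^2(B) \leq w_e^2(B,C) - c^2(C)$, that is,
\begin{eqnarray*}
w^2(B) + c^2(C) \leq w_e^2(B,C).
\end{eqnarray*}
Interchanging the roles of $B$ and $C$ in the same argument — starting again from $|\langle Bx,x\rangle|^2 + |\langle Cx,x\rangle|^2 \leq w_e^2(B,C)$ but now discarding $|\langle Bx,x\rangle|^2 \geq c^2(B)$ — produces $w^2(C) + c^2(B) \leq w_e^2(B,C)$.

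Finally I would combine the two estimates by taking the maximum of the two left-hand sides, obtaining
\begin{eqnarray*}
\max\left\{w^2(B) + c^2(C),\, w^2(C) + c^2(B)\right\} \leq w_e^2(B,C),
\end{eqnarray*}
which is the assertion. I do not anticipate any genuine obstacle here: the only point requiring a moment's care is the order of operations — one must pass to the Crawford-number lower bound for one summand \emph{before} taking the supremum that produces the numerical-radius upper bound for the other summand, so that the bound on the discarded term remains uniform in $x$. This is exactly the same bookkeeping already carried out in Theorem \ref{th2}, here without even needing the parallelogram identity.
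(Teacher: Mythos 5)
Your proof is correct, but it takes a genuinely different route from the paper. You argue directly from the definitions: for each unit vector $x$ you have $|\langle Bx,x\rangle|^2+|\langle Cx,x\rangle|^2\leq w_e^2(B,C)$, you discard the second summand using the uniform lower bound $|\langle Cx,x\rangle|^2\geq c^2(C)$ \emph{before} taking the supremum in $x$, and then the supremum yields $w^2(B)+c^2(C)\leq w_e^2(B,C)$; symmetrically for the other term. The paper instead derives the statement as a corollary of Theorem \ref{th2}: it first establishes the parallelogram-type identity $w_e^2(B+C,B-C)=2\,w_e^2(B,C)$ and then applies Theorem \ref{th2} with $B+C$ and $B-C$ in place of $B$ and $C$, which after the substitution $w^2(2B)=4w^2(B)$, $c^2(2C)=4c^2(C)$ gives the same bound. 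Your argument is more elementary and self-contained (it does not need Theorem \ref{th2} nor the parallelogram identity), and in fact it is essentially the same bookkeeping the paper uses inside the proof of Theorem \ref{th2} itself, just applied to $(B,C)$ rather than $(B+C,B-C)$; what the paper's route buys is the explicit identity \eqref{eq5} linking the two theorems, which it reuses as a structural observation, whereas your route shows the inequality is an immediate consequence of the definitions alone.
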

\begin{proof}
Let $x\in \mathscr{H}$ with $\|x\|=1$. Then we have  \begin{eqnarray*}|\left\langle Bx,x\right\rangle+\left\langle Cx,x\right\rangle|^2+|\left\langle Bx,x\right\rangle-\left\langle Cx,x\right\rangle|^2=2(|\left\langle Bx,x\right\rangle|^2+|\left\langle Cx,x\right\rangle|^2),\end{eqnarray*}
that is,
\begin{eqnarray*}|\left\langle (B+C)x,x\right\rangle|^2+|\left\langle (B-C)x,x\right\rangle|^2=2(|\left\langle Bx,x\right\rangle|^2+|\left\langle Cx,x\right\rangle|^2).\end{eqnarray*}
This implies that \begin{eqnarray}
w_e^2(B+C,B-C)=2w_e^2(B,C).
\label{eq5}\end{eqnarray} 
Now replacing $B$ by $B+C$ and $C$ by $B-C$ in  Theorem \ref{th2} we obtain that \begin{eqnarray}
2\max\left\lbrace w^2(B)+c^2(C),w^2(C)+c^2(B)\right\rbrace\leq w_e^2(B+C,B-C).
\label{eq6}\end{eqnarray} 
Therefore, the required inequality follows from (\ref{eq6}) by using the fact (\ref{eq5}).

\end{proof}

\begin{remark}
	(i) Following Theorem \ref{cor2}, we have for $ B,C \in \mathbb{B} (\mathscr{H}),$ $w_e(B,C)=w(B)$ implies if $\lim_{n\to \infty} |\langle Bx_n,x_n\rangle|=w(B)$ then $\lim_{n\to \infty} |\langle Cx_n,x_n\rangle=0.$ It should be mentioned here that the converse part is not necessarily true.\\
(ii) For normal operators $ B,C \in \mathbb{B} (\mathscr{H})$, the inequality in Theorem \ref{cor2} turns into the form  $$\max\left\lbrace \|B\|^2+c^2(C),\|C\|^2+c^2(B)\right\rbrace\leq w_e^2(B,C).$$
(iii) If we replace B by $\Re(A)$ and C by $\Im(A)$ in the inequality in Theorem  \ref{cor2}, then we get the following lower bound (see \cite[Th. 3.3]{BBPLAA}) for the numerical radius of a bounded linear operator $A$ on $\mathscr{H}$: $$\max\left\lbrace \|\Re(A)\|^2+c^2(\Im(A)),\|\Im(A)\|^2+c^2(\Re(A))\right\rbrace\leq w^2(A).$$ 
\end{remark}

To prove the next result we need the following inequality for vectors in $\mathscr{H}$, knows as Buzano's inequality.

\begin{lemma}$($\cite{S}$)$ 
	Let $ x,y,e\in \mathscr{H}$ with $ \|e\|=1$, then
	$$ \mid\langle x,e\rangle\langle e,y\rangle\mid\leq\frac{1}{2}\left(\mid\langle x,y\rangle \mid +\|x\|\|y\|\right).$$\label{lem11}\end{lemma}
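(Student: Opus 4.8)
The plan is to prove Buzano's inequality by projecting onto the line spanned by $e$. First I would dispose of the degenerate cases: if $x=0$ or $y=0$ the inequality is trivial, so by homogeneity in $x$ and in $y$ I may assume $\|x\|=\|y\|=1$. Let $P$ be the orthogonal projection of $\mathscr{H}$ onto $\operatorname{span}\{e\}$, so that $\|Pz\|=|\langle z,e\rangle|$ for every $z$ and $\langle Px,y\rangle=\langle x,e\rangle\langle e,y\rangle$. Writing $x'=x-Px$ and $y'=y-Py$, both orthogonal to $e$, we have $\langle x',y\rangle=\langle x',y'\rangle$ because $x'\perp Py$, and hence the identity
\[
\langle x,e\rangle\langle e,y\rangle=\langle Px,y\rangle=\langle x,y\rangle-\langle x',y'\rangle .
\]
By the triangle inequality together with the Cauchy--Schwarz inequality this yields the estimate $|\langle x,e\rangle\langle e,y\rangle|\le|\langle x,y\rangle|+\|x'\|\,\|y'\|$, and equivalently the lower bound $|\langle x,y\rangle|\ge|\langle x,e\rangle\langle e,y\rangle|-\|x'\|\,\|y'\|$.

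Next I would quantify the two remainder terms. Put $p=|\langle x,e\rangle|$ and $q=|\langle y,e\rangle|$; since $\|e\|=1$ we have $p,q\in[0,1]$, the Pythagorean identity gives $\|x'\|^2=1-p^2$ and $\|y'\|^2=1-q^2$, and $|\langle x,e\rangle\langle e,y\rangle|=pq$. Thus the claimed inequality, which after normalization reads $pq\le\tfrac12\big(|\langle x,y\rangle|+1\big)$, will follow once I show
\[
pq-\sqrt{(1-p^2)(1-q^2)}\ \ge\ 2pq-1 ,
\]
since by the previous step the left-hand side is a lower bound for $|\langle x,y\rangle|$.

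Finally, this last inequality rearranges to $1-pq\ge\sqrt{(1-p^2)(1-q^2)}$, and both sides are nonnegative because $p,q\in[0,1]$; squaring therefore reduces it to $(1-pq)^2\ge(1-p^2)(1-q^2)$, i.e. to $p^2+q^2\ge 2pq$, which is nothing but $(p-q)^2\ge 0$. Undoing the normalization restores the general statement. I do not expect any genuine obstacle here; the only subtlety worth flagging is the direction of the inequalities — one must feed the \emph{lower} bound $|\langle x,y\rangle|\ge pq-\|x'\|\,\|y'\|$ into the argument (not the easy upper bound) in order to produce the constant $\tfrac12$, after which the whole statement collapses to the elementary fact $(p-q)^2\ge 0$.
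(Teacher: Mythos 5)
Your argument is correct, and there is nothing in the paper to compare it against: the lemma is quoted from Buzano's paper \cite{S} and used as a black box, with no proof supplied. Your normalization, the identity $\langle x,e\rangle\langle e,y\rangle=\langle x,y\rangle-\langle x',y'\rangle$ obtained from the projection $P$ onto $\operatorname{span}\{e\}$, the resulting lower bound $|\langle x,y\rangle|\ge pq-\sqrt{(1-p^2)(1-q^2)}$, and the reduction of $1-pq\ge\sqrt{(1-p^2)(1-q^2)}$ to $(p-q)^2\ge 0$ are all sound, and you correctly flag that it is the lower bound, not the easy upper bound, that yields the constant $\tfrac12$. As a remark, your computation is essentially a hands-on version of the standard slick proof: since $2P-I$ is a self-adjoint unitary (a reflection), one has
\[
\bigl|2\langle x,e\rangle\langle e,y\rangle-\langle x,y\rangle\bigr|=\bigl|\langle (2P-I)x,y\rangle\bigr|\le \|x\|\,\|y\|,
\]
and the triangle inequality gives Buzano's inequality in one line; your step bounding $pq-\sqrt{(1-p^2)(1-q^2)}$ from below by $2pq-1$ encodes exactly this estimate after normalization. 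Either route is acceptable; yours is more elementary but slightly longer.
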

\begin{theorem}
Let $ B,C\in\mathbb{B}(\mathscr{H})$, then $$ w_e^2(B,C)\leq \min \left \{ w^2(B+C), w^2(B-C) \right\} +\frac{1}{2}\|C^*C+BB^*\|+w(BC).$$
\label{th3}\end{theorem}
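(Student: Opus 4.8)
The plan is to estimate $|\langle Bx,x\rangle|^2+|\langle Cx,x\rangle|^2$ for a fixed unit vector $x$, write each term as $|\langle Bx,x\rangle\langle x,x\rangle|$ so that Buzano's inequality (Lemma \ref{lem11}) can be applied with $e=x$, and then combine the resulting cross terms. Concretely, for a unit vector $x$ I would start from the polarization-type identity already used in Theorems \ref{th2} and \ref{cor2}, namely
\[
|\langle Bx,x\rangle|^2+|\langle Cx,x\rangle|^2=\tfrac12\big(|\langle (B+C)x,x\rangle|^2+|\langle (B-C)x,x\rangle|^2\big),
\]
and bound the right-hand side. Writing $|\langle (B-C)x,x\rangle|^2\le w^2(B-C)$ reduces matters to controlling $|\langle (B+C)x,x\rangle|^2$ by $2\,[\,\text{something involving }\|C^*C+BB^*\|\text{ and }w(BC)\,]$; by the symmetry $B\pm C$ this will simultaneously give the version with the roles of the two signs swapped, hence the $\min$.

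The key step is the Buzano estimate for the product term. Expanding $|\langle (B+C)x,x\rangle|^2=|\langle Bx,x\rangle+\langle Cx,x\rangle|^2=|\langle Bx,x\rangle|^2+|\langle Cx,x\rangle|^2+2\,\Re\big(\langle Bx,x\rangle\overline{\langle Cx,x\rangle}\big)$, I would bound the cross term $\Re\big(\langle Bx,x\rangle\langle x,Cx\rangle\big)=\Re\big(\langle Bx,x\rangle\langle x,C^*{}^*x\rangle\big)$; rewriting $\langle Bx,x\rangle\langle x,C^*x\rangle$ in the form $\langle Bx,x\rangle\langle x, C^*x\rangle$ and applying Lemma \ref{lem11} with $x\mapsto Bx$, $y\mapsto C^*x$, $e=x$ gives
\[
\big|\langle Bx,x\rangle\langle x,C^*x\rangle\big|\le\tfrac12\big(|\langle Bx,C^*x\rangle|+\|Bx\|\|C^*x\|\big)=\tfrac12\big(|\langle CBx,x\rangle|+\|Bx\|\|C^*x\|\big).
\]
Then $|\langle CBx,x\rangle|\le w(CB)=w(BC)$, while $\|Bx\|\|C^*x\|\le\tfrac12(\|Bx\|^2+\|C^*x\|^2)=\tfrac12\langle (B^*B+CC^*)x,x\rangle\le\tfrac12\|B^*B+CC^*\|$. (One must be a little careful which of $B^*B+CC^*$ versus $C^*C+BB^*$ appears; choosing the pairing $\langle Bx,x\rangle\langle x,C^*x\rangle$ versus $\langle Cx,x\rangle\langle x,B^*x\rangle$ gives the two possibilities, and at least one of them yields the stated $\|C^*C+BB^*\|$ — if not, interchanging $B$ and $C$ at the outset, which leaves $w_e(B,C)$ and $w(BC)$ unchanged, fixes it.)

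Putting the pieces together: $|\langle (B+C)x,x\rangle|^2\le 2\big(|\langle Bx,x\rangle|^2+|\langle Cx,x\rangle|^2\big)-|\langle (B-C)x,x\rangle|^2$ is an identity, so instead I substitute the Buzano bound directly into $|\langle (B+C)x,x\rangle|^2= |\langle Bx,x\rangle|^2+|\langle Cx,x\rangle|^2 + 2\Re(\cdots)$, obtaining
\[
|\langle Bx,x\rangle|^2+|\langle Cx,x\rangle|^2 \le |\langle (B+C)x,x\rangle|^2 + w(BC)+\tfrac12\|C^*C+BB^*\| - 2\big(|\langle Bx,x\rangle|^2+|\langle Cx,x\rangle|^2\big)+ \cdots
\]
— rather than chase this bookkeeping here, the clean route is: from $2(|\langle Bx,x\rangle|^2+|\langle Cx,x\rangle|^2)=|\langle(B+C)x,x\rangle|^2+|\langle(B-C)x,x\rangle|^2$ and $|\langle(B-C)x,x\rangle|^2 = |\langle Bx,x\rangle|^2+|\langle Cx,x\rangle|^2-2\Re(\langle Bx,x\rangle\langle x,C^*x\rangle)$, one gets $|\langle Bx,x\rangle|^2+|\langle Cx,x\rangle|^2 = |\langle(B+C)x,x\rangle|^2 - 2\Re(\langle Bx,x\rangle\langle x,C^*x\rangle) \le w^2(B+C)+w(BC)+\tfrac12\|C^*C+BB^*\|$, and taking the supremum over unit $x$ gives $w_e^2(B,C)\le w^2(B+C)+\tfrac12\|C^*C+BB^*\|+w(BC)$. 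Repeating with $C$ replaced by $-C$ gives the same bound with $w^2(B-C)$, and the $\min$ follows. The main obstacle is purely organizational: getting the cross-term identity and the Buzano application to land on exactly the operator $C^*C+BB^*$ (as opposed to its companion $B^*B+CC^*$), which is handled by the freedom to swap $B\leftrightarrow C$ and $C\leftrightarrow -C$, none of which changes $w_e(B,C)$ or $w(BC)$.
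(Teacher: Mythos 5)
Your overall strategy is exactly the paper's: expand $|\langle(B\pm C)x,x\rangle|^2$, bound the cross term by Buzano's inequality (Lemma \ref{lem11}) with $e=x$, use $\|u\|\|v\|\le\tfrac12(\|u\|^2+\|v\|^2)$ to produce the operator-norm term, take the supremum over unit vectors, and then replace $C$ by $-C$ to obtain the minimum. However, there is one genuine error in how you land on the stated terms. With the pairing you commit to (applying Lemma \ref{lem11} to $Bx$ and $C^*x$) the Buzano step gives $\tfrac12\big(|\langle CBx,x\rangle|+\|Bx\|\|C^*x\|\big)$, hence the bound $w^2(B\pm C)+\tfrac12\|B^*B+CC^*\|+w(CB)$; to convert this into the stated inequality you invoke ``$w(CB)=w(BC)$'' and assert that interchanging $B$ and $C$ ``leaves $w(BC)$ unchanged''. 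Both claims are false in general: for $B=\begin{pmatrix}0&1\\0&0\end{pmatrix}$ and $C=\begin{pmatrix}1&0\\0&0\end{pmatrix}$ one has $BC=0$, so $w(BC)=0$, while $w(CB)=\tfrac12$.

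The gap is easily repaired, and the repair is precisely the paper's choice: apply Lemma \ref{lem11} to the other pairing, namely to $Cx$ and $B^*x$ with $e=x$, which gives $|\langle Cx,x\rangle\langle x,B^*x\rangle|\le\tfrac12\big(|\langle Cx,B^*x\rangle|+\|Cx\|\|B^*x\|\big)=\tfrac12\big(|\langle BCx,x\rangle|+\|Cx\|\|B^*x\|\big)$; this produces $w(BC)$ and $\tfrac12\|C^*C+BB^*\|$ simultaneously, so no interchange of $B$ and $C$ and no appeal to $w(BC)=w(CB)$ is needed. Equivalently, you may keep your pairing and apply the inequality you actually derived to the ordered pair $(C,B)$: since $w_e(C,B)=w_e(B,C)$ and $w(C\pm B)=w(B\pm C)$, this substitution converts the two companion terms $\|B^*B+CC^*\|$ and $w(CB)$ into $\|C^*C+BB^*\|$ and $w(BC)$ at once --- the point being that they switch together; you cannot fix the norm term by a swap while declaring the product term unaffected. (A minor further slip: in your cross-term identity the second factor should be $\overline{\langle Cx,x\rangle}=\langle x,Cx\rangle$, whereas $\langle x,C^*x\rangle=\langle Cx,x\rangle$; this is harmless here because only the modulus of the cross term is used.) With the correct pairing inserted, your argument coincides with the paper's proof.
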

\begin{proof}
Let $x$ be an unit vector in $\mathscr{H}$. Then we have
\begin{eqnarray*}
|\left\langle Cx,x\right\rangle|^2-2Re{[\left\langle Cx,x\right\rangle\overline{\left\langle Bx,x\right\rangle}]}+|\left\langle Bx,x\right\rangle|^2&=&|\left\langle Cx,x\right\rangle-\left\langle Bx,x\right\rangle|^2\\&=&|\left\langle (C- B)x,x\right\rangle|^2\\&\leq& w^2(C-B).
\end{eqnarray*}Thus,\begin{eqnarray*}
|\left\langle Cx,x\right\rangle|^2+|\left\langle Bx,x\right\rangle|^2&\leq& w^2(C-B)+2Re{[\left\langle Cx,x\right\rangle\overline{\left\langle Bx,x\right\rangle}]}\\&\leq&w^2(C-B)+2|\left\langle Cx,x\right\rangle\left\langle Bx,x\right\rangle|\\&\leq&w^2(C-B)+\|Cx\|\|B^*x\|+|\langle Cx,B^*x\rangle|\,(\textit{by Lemma \ref{lem11}})\\&\leq& w^2(C-B)+\frac{1}{2}(\|Cx\|^2+\|B^*x\|^2)+w(BC)\\&\leq& w^2(C-B)+\frac{1}{2}\|C^*C+BB^*\|+w(BC). 
\end{eqnarray*}
Therefore, taking supremum over all $x$ in $\mathscr{H}$ with $\|x\|=1$, we get 
\begin{eqnarray}\label{p6}
	w^2_e(B,C) \leq w^2(B-C)+\frac{1}{2}\|C^*C+BB^*\|+w(BC). 
\end{eqnarray}
Replacing $C$ by $-C$ in the above inequality, we obtain that 
\begin{eqnarray}\label{p7}
	w^2_e(B,C) \leq w^2(B+C)+\frac{1}{2}\|C^*C+BB^*\|+w(BC). 
\end{eqnarray}
Hence, the desired bound follows from \eqref{p6} and \eqref{p7}.
\end{proof}

\begin{remark}
If we take $B=C=A$ in \eqref{p6}, then we get the following upper bound ( see \cite{AK}) for the numerical radius of a bounded linear operator $A$ on $\mathscr{H}$: $$ w^2(A)\leq \frac{1}{4}\|A^*A+AA^*\|+\frac{1}{2}w(A^2).$$ 
\end{remark}

In \cite[Th. 1]{D}, it is proved that if $B,C$ are bounded linear operators on $\mathcal{H},$ then 
 \begin{eqnarray}\label{drag01}
 	w\left(\frac{1}{2} B^2+ \frac{1}{2} C^2\right) \leq w_e^2(B,C).
 \end{eqnarray}
In the next result we establish a generalization of the above  lower bound for the Euclidean operator radius for a pair $(B,C)$ of two bounded linear operators on $\mathscr{H}.$ 
\begin{theorem}\label{th4}
Let $ B,C\in\mathbb{B}(\mathscr{H})$, then 
\begin{eqnarray}\label{pmax}
	\max_{0\leq\alpha\leq1}w\left(\alpha B^2+(1-\alpha)C^2\right) \leq w_e^2(B,C).
	\end{eqnarray}
\end{theorem}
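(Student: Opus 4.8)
The plan is to prove the bound $\alpha$ by $\alpha$: since the right-hand side $w_e^2(B,C)$ does not depend on $\alpha$, it suffices to show that $w\bigl(\alpha B^2+(1-\alpha)C^2\bigr)\le w_e^2(B,C)$ for every fixed $\alpha\in[0,1]$, and then to pass to the maximum over $\alpha\in[0,1]$. The idea is to first decouple $B^2$ and $C^2$ using that $w(\cdot)$ is a norm, then pass from $w(B^2),w(C^2)$ to $w^2(B),w^2(C)$ via the power inequality, and finally dominate the resulting convex combination by $w_e^2(B,C)$.

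Concretely, I would proceed in four short steps. First, using homogeneity and the triangle inequality for the norm $w(\cdot)$ on $\mathbb{B}(\mathscr{H})$, together with $\alpha\ge 0$ and $1-\alpha\ge 0$,
\[
w\bigl(\alpha B^2+(1-\alpha)C^2\bigr)\le \alpha\,w(B^2)+(1-\alpha)\,w(C^2).
\]
Second, apply the well-known power inequality $w(T^2)\le w^2(T)$ (valid for every $T\in\mathbb{B}(\mathscr{H})$) to $T=B$ and $T=C$, obtaining
\[
w\bigl(\alpha B^2+(1-\alpha)C^2\bigr)\le \alpha\,w^2(B)+(1-\alpha)\,w^2(C)\le \max\{w^2(B),w^2(C)\},
\]
the last step because the middle expression is a convex combination of $w^2(B)$ and $w^2(C)$. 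Third, from the very definition of the Euclidean operator radius, for every unit vector $x\in\mathscr{H}$ one has $|\langle Bx,x\rangle|^2\le |\langle Bx,x\rangle|^2+|\langle Cx,x\rangle|^2\le w_e^2(B,C)$, and likewise for $C$; taking the supremum over unit vectors gives $w^2(B)\le w_e^2(B,C)$ and $w^2(C)\le w_e^2(B,C)$, hence $\max\{w^2(B),w^2(C)\}\le w_e^2(B,C)$. Fourth, chaining these inequalities gives $w\bigl(\alpha B^2+(1-\alpha)C^2\bigr)\le w_e^2(B,C)$ for each $\alpha\in[0,1]$, and taking the maximum over $\alpha$ yields \eqref{pmax}. (Continuity of $\alpha\mapsto w(\alpha B^2+(1-\alpha)C^2)$ on the compact interval $[0,1]$ guarantees the maximum is attained, though this is not needed for the estimate itself.)

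I do not expect a genuine obstacle here; the only ingredient that is not entirely elementary is the power inequality $w(T^2)\le w^2(T)$, and everything else is the triangle inequality for $w(\cdot)$ plus the trivial comparison $w^2(B),w^2(C)\le w_e^2(B,C)$. It is worth noting that the power inequality is genuinely unavoidable: specializing \eqref{pmax} to $C=0$ (or $B=0$) reduces exactly to $w(B^2)\le w^2(B)$, and specializing to $\alpha=\tfrac12$ recovers Dragomir's bound \eqref{drag01}. If a self-contained pointwise argument is preferred over citing the power inequality, one can instead fix a unit vector $x$, choose $\theta\in\mathbb{R}$ with $e^{i\theta}\langle(\alpha B^2+(1-\alpha)C^2)x,x\rangle=|\langle(\alpha B^2+(1-\alpha)C^2)x,x\rangle|$, rewrite the modulus as $\langle\Re(e^{i\theta}(\alpha B^2+(1-\alpha)C^2))x,x\rangle$, and bound $\|\Re(e^{i\theta}B^2)\|\le w(B^2)$ and $\|\Re(e^{i\theta}C^2)\|\le w(C^2)$; this makes the mechanism explicit but still passes through the power inequality at the end.
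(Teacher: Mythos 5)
Your proof is correct, but it takes a genuinely different route from the paper. The paper works from below: a pointwise Cauchy--Schwarz argument gives $w_e(B,C)\ge w(\sqrt{\alpha}B\pm\sqrt{1-\alpha}C)$, then the power inequality is applied to these two combinations and the identity $(\sqrt{\alpha}B+\sqrt{1-\alpha}C)^2+(\sqrt{\alpha}B-\sqrt{1-\alpha}C)^2=2\bigl(\alpha B^2+(1-\alpha)C^2\bigr)$ together with subadditivity of $w$ yields the bound. You instead work from above on the left-hand side: triangle inequality, then the power inequality on $B$ and $C$ separately, then the trivial estimates $w^2(B),w^2(C)\le w_e^2(B,C)$. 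Both arguments invoke $w(T^2)\le w^2(T)$ (the paper does so without comment, in the step $w^2(\sqrt{\alpha}B\pm\sqrt{1-\alpha}C)\ge w((\sqrt{\alpha}B\pm\sqrt{1-\alpha}C)^2)$), and your observation that this is unavoidable --- the case $C=0$, $\alpha=1$ \emph{is} the power inequality --- is accurate. What your route buys is brevity and a sharper structural insight: you actually prove
\[
\max_{0\le\alpha\le1} w\bigl(\alpha B^2+(1-\alpha)C^2\bigr)\le\max\{w^2(B),w^2(C)\}\le w_e^2(B,C),
\]
which shows that the lower bound of Theorem \ref{th4} for $w_e^2(B,C)$ never exceeds the elementary bound $\max\{w^2(B),w^2(C)\}$ (indeed, in the paper's own $2\times 2$ example both sides equal $4$); this tempers the interest of the refinement over \eqref{drag01}, though it does not affect its validity. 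What the paper's route buys is the intermediate inequality $w_e(B,C)\ge w(\sqrt{\alpha}B\pm\sqrt{1-\alpha}C)$, which retains the cross terms and is of some independent use, and it keeps the argument in the same pointwise style as the rest of the section.
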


\begin{proof}
Let $x$ be an unit vector in $\mathscr{H}$. Then applying the inequality $(ab+cd)^2\leq (a^2+c^2)(b^2+d^2))$ for real numbers $a,b,c,d$, we have
\begin{eqnarray*}
\sqrt{\alpha}|\langle Bx,x\rangle|+\sqrt{1-\alpha}|\langle Cx,x\rangle|&\leq&(|\langle Bx,x\rangle|^2+|\langle Cx,x\rangle|^2)^{\frac{1}{2}}((\sqrt{\alpha})^2+(\sqrt{1-\alpha})^2)^{\frac{1}{2}}\\
  	&=&(|\langle Bx,x\rangle|^2+|\langle Cx,x\rangle|^2)^{\frac{1}{2}}.
\end{eqnarray*}Therefore,
\begin{eqnarray*}
(|\langle Bx,x\rangle|^2+|\langle Cx,x\rangle|^2)^{\frac{1}{2}}&\geq&\sqrt{\alpha}|\langle Bx,x\rangle|+\sqrt{1-\alpha}|\langle Cx,x\rangle|\\&=&|\langle \sqrt{\alpha}Bx,x\rangle|+|\langle\sqrt{1-\alpha} Cx,x\rangle|\\&\geq&|\langle \sqrt{\alpha}Bx,x\rangle\pm\langle\sqrt{1-\alpha} Cx,x\rangle|\\&=&\langle\left( \sqrt{\alpha}B\pm\sqrt{1-\alpha} C\right) x,x\rangle|.
\end{eqnarray*}
Taking supremum over all $x$ in $\mathscr{H}$ with $\|x\|=1$, we get $$ w_e(B,C)\geq w(\sqrt{\alpha}B\pm\sqrt{1-\alpha}C).$$ 
Therefore,
\begin{eqnarray*}
2 w_e^2(B,C)&\geq& w^2(\sqrt{\alpha}B+\sqrt{1-\alpha}C)+ w^2(\sqrt{\alpha}B-\sqrt{1-\alpha}C)\\&\geq& w((\sqrt{\alpha}B+\sqrt{1-\alpha}C)^2)+ w((\sqrt{\alpha}B-\sqrt{1-\alpha}C)^2)\\&\geq& w((\sqrt{\alpha}B+\sqrt{1-\alpha}C)^2+(\sqrt{\alpha}B-\sqrt{1-\alpha}C)^2)\\&=& 2w(\alpha B^2+(1-\alpha)C^2).
\end{eqnarray*}
This implies $w_e^2(B,C)\geq w(\alpha B^2+(1-\alpha)C^2).$ 
This holds for all $\alpha \in [0,1]$ and so we get the desired inequality.
\end{proof}

\begin{remark}
(i) Clearly the inequality (\ref{pmax}) is a refinement of the inequality (\ref{drag01}) obtained in \cite[Th. 1]{D}. To see that the refinement is proper,  consider $B=\begin{pmatrix}
	1&0\\
	0&0
\end{pmatrix}$ and $C=\begin{pmatrix}
0&0\\
0&2
\end{pmatrix}.$ Then we  get,  $$w\left(\frac{1}{2} B^2+ \frac{1}{2} C^2\right)=2< 4=\max_{0\leq\alpha\leq1} w(\alpha B^2+(1-\alpha)C^2).$$


(ii)  If we replace $B$ by $\Re(A)$ and $C$ by $\Im(A)$ in Theorem \ref{th4}, we obtain that for $ A\in\mathbb{B}(\mathscr{H}),$
 \begin{eqnarray}\label{corn1}
 \left\|\alpha (\Re(A))^2+(1-\alpha)(\Im(A))^2\right\|\leq w^2(A),
 \end{eqnarray}
for all $\alpha \in [0,1].$
  In particular, for $\alpha=\frac12,$  we get the well-known lower bound
  $$\frac14\| A^*A+AA^*\|\leq w^2(A).$$
\end{remark}  
  
  Next, we prove the following theorem.

\begin{theorem}\label{th5}
Let $ B,C\in\mathbb{B}(\mathscr{H})$, then 
$$ w_e^2(B,C)\leq w^2(\sqrt{\alpha} B+\sqrt{1-\alpha}C)+w^2(\sqrt{1-\alpha} B+\sqrt{\alpha}C),$$ for all $\alpha\in [0,1].$ 
\end{theorem}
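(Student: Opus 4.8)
The plan is to peel the statement down to a pointwise scalar inequality and then invoke the definition of the numerical radius termwise. Fix a unit vector $x\in\mathscr{H}$ and abbreviate $b=\langle Bx,x\rangle$ and $c=\langle Cx,x\rangle$, so that $\langle(\sqrt{\alpha}B+\sqrt{1-\alpha}C)x,x\rangle=\sqrt{\alpha}\,b+\sqrt{1-\alpha}\,c$ and $\langle(\sqrt{1-\alpha}B+\sqrt{\alpha}C)x,x\rangle=\sqrt{1-\alpha}\,b+\sqrt{\alpha}\,c$. With this notation $w_e^2(B,C)=\sup_{\|x\|=1}(|b|^2+|c|^2)$, while for each such $x$ the two numerical radii on the right dominate $|\sqrt{\alpha}\,b+\sqrt{1-\alpha}\,c|^2$ and $|\sqrt{1-\alpha}\,b+\sqrt{\alpha}\,c|^2$ respectively. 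Hence it suffices to establish the pointwise estimate
\[
|b|^2+|c|^2\;\leq\;|\sqrt{\alpha}\,b+\sqrt{1-\alpha}\,c|^2+|\sqrt{1-\alpha}\,b+\sqrt{\alpha}\,c|^2,
\]
and then to bound each summand on the right by $w^2(\sqrt{\alpha}B+\sqrt{1-\alpha}C)$ and $w^2(\sqrt{1-\alpha}B+\sqrt{\alpha}C)$ before passing to the supremum over unit vectors $x$.

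The heart of the argument is therefore the two-variable scalar inequality displayed above, which I would treat through the linear change of variables $(b,c)\mapsto(p,q)$ with $p=\sqrt{\alpha}\,b+\sqrt{1-\alpha}\,c$ and $q=\sqrt{1-\alpha}\,b+\sqrt{\alpha}\,c$, i.e.\ the action of the symmetric matrix $M_\alpha=\begin{pmatrix}\sqrt{\alpha}&\sqrt{1-\alpha}\\ \sqrt{1-\alpha}&\sqrt{\alpha}\end{pmatrix}$ on the complex vector $(b,c)$. Expanding $|p|^2+|q|^2$ and comparing with $|b|^2+|c|^2$ reduces the claim to controlling the cross term $\sqrt{\alpha(1-\alpha)}\,\Re(b\bar{c})$ that the off-diagonal entries of $M_\alpha$ generate. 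This is a closed, elementary computation in the two scalars $b,c\in\mathbb{C}$, with $\alpha\in[0,1]$ as a parameter, and it is exactly the place where the particular pairing of the coefficients $\sqrt{\alpha},\sqrt{1-\alpha}$ between the two operators has to be exploited.

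The step I expect to be the main obstacle is precisely this pointwise scalar estimate. In contrast with the lower bounds of Theorems \ref{th2} and \ref{th4}, where the parallelogram law forces the relevant cross terms to cancel automatically, the matrix $M_\alpha$ here is symmetric rather than orthogonal, so the cross term $\sqrt{\alpha(1-\alpha)}\,\Re(b\bar{c})$ it produces does not vanish identically; controlling it for all $b,c\in\mathbb{C}$ and all $\alpha\in[0,1]$ is the delicate point on which the whole argument rests. Once that scalar inequality is secured, the return to operators is routine: apply it with $b=\langle Bx,x\rangle$ and $c=\langle Cx,x\rangle$, dominate the two right-hand terms by $w^2(\sqrt{\alpha}B+\sqrt{1-\alpha}C)$ and $w^2(\sqrt{1-\alpha}B+\sqrt{\alpha}C)$, and take the supremum over unit vectors to obtain the stated bound. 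As natural stress tests I would examine the endpoints $\alpha\in\{0,1\}$, where the right-hand side reduces to $w^2(B)+w^2(C)$ and the bound is immediate from $|\langle Bx,x\rangle|\leq w(B)$ and $|\langle Cx,x\rangle|\leq w(C)$, and the midpoint $\alpha=\tfrac12$, where the two summands coincide and the inequality is pushed to its most restrictive form.
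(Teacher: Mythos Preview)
Your reduction to a scalar inequality is the right instinct, but the scalar inequality you isolate is simply false. Expanding
\[
|\sqrt{\alpha}\,b+\sqrt{1-\alpha}\,c|^2+|\sqrt{1-\alpha}\,b+\sqrt{\alpha}\,c|^2
=|b|^2+|c|^2+4\sqrt{\alpha(1-\alpha)}\,\Re(b\bar c),
\]
so the claimed pointwise estimate is equivalent to $\Re(b\bar c)\ge 0$, which fails for $b=1$, $c=-1$. Worse, this lifts to a counterexample for the theorem as stated: with $B=I$, $C=-I$ and $\alpha=\tfrac12$ one gets $w_e^2(B,C)=2$ while both operators on the right are zero. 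So the ``delicate point'' you flagged is not merely delicate; it cannot be rescued, because the matrix $M_\alpha$ you wrote down is not a contraction on $\ell^2$.

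The paper's own proof actually establishes the inequality with a \emph{minus} sign in the second term,
\[
w_e^2(B,C)\le w^2(\sqrt{\alpha}\,B+\sqrt{1-\alpha}\,C)+w^2(\sqrt{1-\alpha}\,B-\sqrt{\alpha}\,C),
\]
and the subsequent remarks (the case $\alpha=\tfrac12$ giving $\tfrac12\bigl(w^2(B+C)+w^2(B-C)\bigr)$, and the application to $\Re(A),\Im(A)$) confirm that this is the intended statement. With the minus sign the transformation becomes the orthogonal matrix $\begin{pmatrix}\sqrt{\alpha}&\sqrt{1-\alpha}\\ \sqrt{1-\alpha}&-\sqrt{\alpha}\end{pmatrix}$, so that
\[
|b|^2+|c|^2=|\sqrt{\alpha}\,b+\sqrt{1-\alpha}\,c|^2+|\sqrt{1-\alpha}\,b-\sqrt{\alpha}\,c|^2
\]
is an exact identity and the cross terms cancel automatically---there is nothing to control. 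Once you make that sign correction, your outline (apply the identity pointwise, bound each summand by the corresponding numerical radius, take the supremum) is exactly the paper's argument.
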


\begin{proof}
Let $x$ be an unit vector in $\mathscr{H}$. Then we have
\begin{eqnarray*}
|\langle Bx,x\rangle|^2+|\langle Cx,x\rangle|^2&=&|\langle \sqrt{\alpha}Bx,x\rangle+\langle\sqrt{1-\alpha}Cx,x\rangle|^2+|\langle \sqrt{1-\alpha}Bx,x\rangle-\langle\sqrt{\alpha}Cx,x\rangle|^2\\&=&|\langle (\sqrt{\alpha}B+\sqrt{1-\alpha}C)x,x\rangle|^2+|\langle( \sqrt{1-\alpha}B-\sqrt{\alpha}C)x,x\rangle|^2\\&\leq& w^2(\sqrt{\alpha}B+\sqrt{1-\alpha}C)+w^2( \sqrt{1-\alpha}B-\sqrt{\alpha}C).
\end{eqnarray*}
Therefore, taking supremum over all $x$ in $\mathscr{H}$ with $\|x\|=1$, we get $$ w_e^2(B,C)\leq w^2(\sqrt{\alpha} B+\sqrt{1-\alpha}C)+w^2(\sqrt{1-\alpha} B-\sqrt{\alpha}C),$$ as desired.
\end{proof}
\begin{remark}
(i)  In particular, if we consider $\alpha=\frac12$ in Theorem \ref{th5}, then we get the following upper bound (see \cite[Th. 2]{D}) for Euclidean operator radius 
$$w_e^2(B,C)\leq \frac12 \left( w^2(B+C)+ w^2(B-C)\right).$$
 (ii) Putting $B=\Re(A)$ and $C=\Im(A)$ in Theorem \ref{th5} we obtain the following upper bound for the numerical radius of  a bounded linear operator $A$ on $\mathscr{H}$, $$ w^2(A)\leq \|\sqrt{\alpha} \Re(A)+\sqrt{1-\alpha} \Im(A)\|^2+\|\sqrt{1-\alpha} \Re(A)-\sqrt{\alpha}\Im(A)\|^2,$$ for all $\alpha \in [0,1]$.
\end{remark}

To prove the next upper bound we need the following lemma known as Jensen's inequality, obtained from more general result for superquadratic functions \cite{b1}.

\begin{lemma} The following inequality  
$$\left(\frac{1}{n}\sum_{k=1}^{n}{a_k}\right)^p\leq\frac{1}{n}\sum_{k=1}^{n}{a_k^p}-\frac{1}{n}\sum_{k=1}^{n}\big|a_k-\frac{1}{n}\sum_{j=1}^{n}{a_j}\big|^p,$$ holds for $p\geq2$ and for every finite positive sequence of real numbers $a_1,a_2,\ldots,a_n$.
\label{b3}\end{lemma}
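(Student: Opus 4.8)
The plan is to deduce Lemma~\ref{b3} from the theory of \emph{superquadratic functions}. Recall that a function $\varphi:[0,\infty)\to\mathbb{R}$ is called superquadratic if for every $x\geq 0$ there is a constant $C(x)\in\mathbb{R}$ such that
\[
\varphi(y)\ \geq\ \varphi(x)+C(x)(y-x)+\varphi(|y-x|)\qquad\text{for all }y\geq 0.
\]
The key point is that the power function $\varphi(t)=t^{p}$ is superquadratic for every $p\geq 2$, with $C(x)=p\,x^{p-1}$; granting this, the refined Jensen inequality of \cite{b1} with the uniform weights $1/n$ gives exactly the claimed bound, because when the defining inequality is evaluated at $y=a_k$ and $x=\frac1n\sum_j a_j$, summed over $k$, and divided by $n$, the linear term $\frac1n\sum_k C(x)(a_k-x)$ vanishes.

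For the superquadraticity of $t^p$, I would fix $x\geq 0$, set $g(y)=y^{p}-x^{p}-p\,x^{p-1}(y-x)-|y-x|^{p}$ for $y\geq 0$, and show $g\geq 0$. The only analytic ingredient is the superadditivity of $t\mapsto t^{q}$ on $[0,\infty)$ for $q\geq 1$, namely $(a+b)^{q}\geq a^{q}+b^{q}$, which follows from $t^{q}\leq t$ on $[0,1]$. For $y\geq x$ one has $g(x)=0$ and
\[
g'(y)=p\bigl(y^{p-1}-x^{p-1}-(y-x)^{p-1}\bigr)\geq 0
\]
by superadditivity with exponent $q=p-1\geq 1$ applied to $y=(y-x)+x$; hence $g\geq 0$ on $[x,\infty)$. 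For $0\leq y<x$, putting $s=x-y\in(0,x]$ and $h(s)=g(x-s)$, one gets $h(0)=0$ and $h'(s)=p\bigl(x^{p-1}-(x-s)^{p-1}-s^{p-1}\bigr)\geq 0$, again by superadditivity applied to $x=(x-s)+s$; so $g\geq 0$ on $[0,x]$ as well. This establishes that $t^{p}$ is superquadratic with the stated $C(x)$.

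To conclude, write $\bar a:=\frac1n\sum_{j=1}^{n}a_j$ and sum the inequality $a_k^{p}\geq \bar a^{\,p}+p\,\bar a^{\,p-1}(a_k-\bar a)+|a_k-\bar a|^{p}$ over $k=1,\dots,n$; dividing by $n$ and using $\sum_k(a_k-\bar a)=0$ yields
\[
\frac1n\sum_{k=1}^{n}a_k^{p}\ \geq\ \Bigl(\frac1n\sum_{k=1}^{n}a_k\Bigr)^{p}+\frac1n\sum_{k=1}^{n}\Bigl|a_k-\frac1n\sum_{j=1}^{n}a_j\Bigr|^{p},
\]
which is precisely the asserted inequality after rearrangement. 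The one genuine obstacle is the pointwise estimate $g\geq 0$, i.e.\ the verification that $t^{p}$ is superquadratic for $p\geq 2$; once that is in hand the rest is bookkeeping, and one may alternatively just cite the general superquadratic Jensen inequality in \cite{b1}.
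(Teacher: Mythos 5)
Your proof is correct: the verification that $t^{p}$ is superquadratic for $p\geq 2$ with $C(x)=p\,x^{p-1}$ (via the superadditivity of $t\mapsto t^{p-1}$ on each side of $x$) is sound, and summing the defining inequality at $y=a_k$, $x=\frac1n\sum_j a_j$ so that the linear term cancels gives exactly the stated bound. The paper itself offers no proof of this lemma---it simply quotes it from \cite{b1}, where it follows from the refined Jensen inequality for superquadratic functions---so your argument takes essentially the intended route while supplying the details that make it self-contained.
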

\begin{theorem}
Let $ B,C\in\mathbb{B}(\mathscr{H})$ , then $$w_e^{2r}(B,C)\leq\frac{1}{2}w^{2r}(B+C)+\frac{1}{2}w^{2r}(B-C)-2^r\inf_{\|x\|=1}|Re(\langle Bx,x\rangle\overline{\langle Cx,x\rangle})|$$ holds for every $r\geq2$.
\end{theorem}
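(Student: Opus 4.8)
The plan is to argue pointwise on unit vectors and then pass to suprema, with Lemma~\ref{b3} as the only nonelementary ingredient. Fix $x\in\mathscr{H}$ with $\|x\|=1$ and write $\beta=\langle Bx,x\rangle$, $\gamma=\langle Cx,x\rangle$. Exactly as in the proof of Theorem~\ref{th2}, the parallelogram identity gives
\[ |\beta|^2+|\gamma|^2=\tfrac12\big(|\langle(B+C)x,x\rangle|^2+|\langle(B-C)x,x\rangle|^2\big). \]
I would then invoke Lemma~\ref{b3} with $n=2$, $p=r\ge 2$, $a_1=|\langle(B+C)x,x\rangle|^2$ and $a_2=|\langle(B-C)x,x\rangle|^2$. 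Since $a_1-\tfrac{a_1+a_2}{2}=\tfrac{a_1-a_2}{2}$ and $a_2-\tfrac{a_1+a_2}{2}=-\tfrac{a_1-a_2}{2}$, the subtracted term in Lemma~\ref{b3} collapses to $\tfrac12\cdot 2\cdot\big|\tfrac{a_1-a_2}{2}\big|^r=\tfrac{1}{2^r}|a_1-a_2|^r$, so that
\[ (|\beta|^2+|\gamma|^2)^r\ \le\ \tfrac12\,|\langle(B+C)x,x\rangle|^{2r}+\tfrac12\,|\langle(B-C)x,x\rangle|^{2r}-\tfrac{1}{2^r}\big|\,|\langle(B+C)x,x\rangle|^2-|\langle(B-C)x,x\rangle|^2\,\big|^r. \]

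The next step is the polarization-type identity $|\langle(B+C)x,x\rangle|^2-|\langle(B-C)x,x\rangle|^2=|\beta+\gamma|^2-|\beta-\gamma|^2=4\,\Re(\beta\overline{\gamma})$, which turns the last term into $\tfrac{4^r}{2^r}|\Re(\beta\overline{\gamma})|^r=2^r|\Re(\langle Bx,x\rangle\overline{\langle Cx,x\rangle})|^r$. Hence for every unit vector $x$,
\[ \big(|\langle Bx,x\rangle|^2+|\langle Cx,x\rangle|^2\big)^r\ \le\ \tfrac12\,|\langle(B+C)x,x\rangle|^{2r}+\tfrac12\,|\langle(B-C)x,x\rangle|^{2r}-2^r\,|\Re(\langle Bx,x\rangle\overline{\langle Cx,x\rangle})|^r. \]
Finally I would take the supremum over all unit vectors $x$. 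On the left, monotonicity of $t\mapsto t^r$ on $[0,\infty)$ gives $\sup_{\|x\|=1}(\,\cdot\,)^r=\big(\sup_{\|x\|=1}(\,\cdot\,)\big)^r=w_e^{2r}(B,C)$. On the right, subadditivity of the supremum together with the identity $\sup_{\|x\|=1}\big(-2^r|\Re(\langle Bx,x\rangle\overline{\langle Cx,x\rangle})|^r\big)=-2^r\inf_{\|x\|=1}|\Re(\langle Bx,x\rangle\overline{\langle Cx,x\rangle})|^r$ produces the asserted bound.

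I do not foresee a serious obstacle: the only substantive tool is Lemma~\ref{b3}, whose hypothesis is precisely what forces $r\ge 2$, and everything else is the parallelogram identity, the identity $|\beta+\gamma|^2-|\beta-\gamma|^2=4\Re(\beta\overline\gamma)$, and the routine estimate $\sup(f-g)\le\sup f-\inf g$. The one point that requires a little care is the clean evaluation of the correction term in Lemma~\ref{b3} for $n=2$, namely that $\tfrac1n\sum_k\big|a_k-\tfrac1n\sum_j a_j\big|^p=2^{-p}|a_1-a_2|^p$; once this is recorded, the estimate assembles immediately (the degenerate case where one of the $a_k$ vanishes is covered by a trivial limiting argument, or by checking the inequality directly there).
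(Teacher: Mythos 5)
Your proposal follows the paper's proof step for step: the parallelogram identity, Lemma \ref{b3} with $n=2$ and $p=r$, the polarization identity $|\beta+\gamma|^2-|\beta-\gamma|^2=4\Re(\beta\overline{\gamma})$, and then a supremum/infimum passage, so methodologically there is nothing new or missing. The one point worth flagging is that your algebra is the \emph{correct} version of the computation: the correction term you obtain is $2^r\,|\Re(\langle Bx,x\rangle\overline{\langle Cx,x\rangle})|^r$, with an $r$-th power. The paper's own proof silently drops this exponent in the penultimate line (writing $\tfrac{2^{2r}}{2^r}|\Re(\langle Bx,x\rangle\overline{\langle Cx,x\rangle})|$ rather than $\tfrac{2^{2r}}{2^r}|\Re(\langle Bx,x\rangle\overline{\langle Cx,x\rangle})|^r$), and the stated theorem inherits that omission. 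Consequently what your argument actually establishes is
\begin{eqnarray*}
w_e^{2r}(B,C)\leq\frac{1}{2}w^{2r}(B+C)+\frac{1}{2}w^{2r}(B-C)-2^r\inf_{\|x\|=1}\big|\Re(\langle Bx,x\rangle\overline{\langle Cx,x\rangle})\big|^r,
\end{eqnarray*}
so your closing claim that taking suprema ``produces the asserted bound'' is not literally accurate: the displayed statement has no $r$-th power on the last term, and the two bounds are not comparable in general (they agree only when the infimum is $0$, $1$, or $r$ is adjusted). Your derivation is sound; the fix is simply to state the conclusion with the exponent $r$ in place, which is evidently what the theorem was intended to say.
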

\begin{proof}
Let $x$ be an unit vector in $\mathscr{H}$. Then
\begin{eqnarray*}
&&\left(|\langle Bx,x\rangle|^2+|\langle Cx,x\rangle|^2\right)^r\\&=&\left(\frac{1}{2}|\left\langle Bx,x\right\rangle+\left\langle Cx,x\right\rangle|^2+\frac{1}{2}|\left\langle Bx,x\right\rangle-\left\langle Cx,x\right\rangle|^2\right)^r\\&=&\left(\frac{1}{2}|\left\langle (B+ C)x,x\right\rangle|^2+\frac{1}{2}|\left\langle (B- C)x,x\right\rangle|^2\right)^r\\&\leq&\frac{1}{2}|\left\langle (B+ C)x,x\right\rangle|^{2r}+\frac{1}{2}|\left\langle (B- C)x,x\right\rangle|^{2r}\\
&&-\frac{1}{2}\big|\frac{1}{2}\left|\langle (B+ C)x,x\right\rangle|^2-\frac{1}{2}|\left\langle (B- C)x,x\right\rangle|^2\big|^r\\&&-\frac{1}{2}\big|\frac{1}{2}|\left\langle (B- C)x,x\right\rangle|^2-\frac{1}{2}|\left\langle (B+ C)x,x\right\rangle|^2\big|^r\,\,(\textit{using Lemma \ref{b3}})\\&=&\frac{1}{2}|\left\langle (B+ C)x,x\right\rangle|^{2r}+\frac{1}{2}|\left\langle (B- C)x,x\right\rangle|^{2r}\\
&& -\frac{1}{2^{r}}\big||\left\langle (B+ C)x,x\right\rangle|^2-|\left\langle (B- C)x,x\right\rangle|^2\big|^r\\&=&\frac{1}{2}|\left\langle (B+ C)x,x\right\rangle|^{2r}+\frac{1}{2}|\left\langle (B- C)x,x\right\rangle|^{2r}-\frac{2^{2r}}{2^{r}}|Re(\langle Bx,x\rangle\overline{\langle Cx,x\rangle})|\\&\leq&\frac{1}{2}w^{2r}(B+C)+\frac{1}{2}w^{2r}(B-C)-2^r\inf_{\|x\|=1}|Re(\langle Bx,x\rangle\overline{\langle Cx,x\rangle})|.
\end{eqnarray*}
Therefore, taking supremum over all $x$ in $\mathscr{H}$ with $\|x\|=1$,  we get the desire result.
\end{proof}

Next we need the following lemma, called generalized Cauchy-Schwarz inequality \cite{a1}.
\begin{lemma}
If $f$ and $g$ be two non-negative continuous functions on $[0,\infty)$ satisfying $f(t)g(t)=t$ for all $t\in[0,\infty),$ then $$ |\langle Ax,y\rangle| \leq \|f(|A|)x\|\|g(|A^*|y\|,$$ for all $ A\in\mathbb{B}(\mathscr{H})$ and $x,y\in\mathscr{H}.$
\label{b2}\end{lemma}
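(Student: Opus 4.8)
The natural approach is via the polar decomposition. Write $A = U|A|$, where $U\in\mathbb{B}(\mathscr{H})$ is the partial isometry with initial space $(\ker A)^{\perp}=\overline{\mathrm{ran}\,|A|}$ and final space $\overline{\mathrm{ran}\,A}=\overline{\mathrm{ran}\,|A^{*}|}$; in particular $U^{*}U$ is the orthogonal projection onto $(\ker A)^{\perp}$, so $U^{*}U|A|=|A|$, and $UU^{*}\leq I$. Since $f,g\geq 0$ are continuous on $[0,\infty)$ with $f(t)g(t)=t$, the continuous functional calculus applied to the positive operator $|A|$ gives $|A|=f(|A|)g(|A|)$, with $f(|A|)$ and $g(|A|)$ commuting self-adjoint operators. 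Hence for $x,y\in\mathscr{H}$,
\[
\langle Ax,y\rangle=\langle U|A|x,y\rangle=\langle f(|A|)g(|A|)x,U^{*}y\rangle=\langle f(|A|)x,\,g(|A|)U^{*}y\rangle,
\]
using commutativity of $f(|A|),g(|A|)$ and self-adjointness of $g(|A|)$ in the last equality. The ordinary Cauchy--Schwarz inequality in $\mathscr{H}$ then yields
\[
|\langle Ax,y\rangle|\leq\|f(|A|)x\|\;\|g(|A|)U^{*}y\|.
\]

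The first factor is already in the required form, so it remains to prove $\|g(|A|)U^{*}y\|\leq\|g(|A^{*}|)y\|$. The key identity here is $U|A|^{n}U^{*}=|A^{*}|^{n}$ for every integer $n\geq 1$: indeed $|A^{*}|^{2}=AA^{*}=U|A|^{2}U^{*}$, while $|A^{*}|=(AA^{*})^{1/2}=U|A|U^{*}$ (one checks $(U|A|U^{*})^{2}=U|A|(U^{*}U)|A|U^{*}=U|A|^{2}U^{*}$ and $U|A|U^{*}\geq 0$), and the general case follows by induction using $U^{*}U|A|=|A|$. Approximating uniformly on the spectra (contained in $[0,\|A\|]$) by polynomials with zero constant term, this extends to $U\varphi(|A|)U^{*}=\varphi(|A^{*}|)$ for every continuous $\varphi$ on $[0,\infty)$ with $\varphi(0)=0$.

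To finish I would write $g(t)^{2}=g(0)^{2}+h(t)$ with $h$ continuous and $h(0)=0$, so that $g(|A|)^{2}=g(0)^{2}I+h(|A|)$ and, by the previous step, $Ug(|A|)^{2}U^{*}=g(0)^{2}\,UU^{*}+h(|A^{*}|)$. Since $UU^{*}\leq I$ this gives $Ug(|A|)^{2}U^{*}\leq g(0)^{2}I+h(|A^{*}|)=g(|A^{*}|)^{2}$, whence
\[
\|g(|A|)U^{*}y\|^{2}=\langle Ug(|A|)^{2}U^{*}y,y\rangle\leq\langle g(|A^{*}|)^{2}y,y\rangle=\|g(|A^{*}|)y\|^{2}.
\]
Combining this with the Cauchy--Schwarz bound above yields $|\langle Ax,y\rangle|\leq\|f(|A|)x\|\,\|g(|A^{*}|)y\|$, as claimed.

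I expect the main obstacle to be precisely the careful bookkeeping around the partial isometry $U$. When $U$ is not unitary, conjugation by $U$ transfers the functional calculus cleanly from $|A|$ to $|A^{*}|$ only for functions vanishing at the origin, which is exactly why the constant $g(0)^{2}$ must be separated off and only estimated (via $UU^{*}\leq I$) rather than matched exactly; writing $Ug(|A|)^{2}U^{*}=g(|A^{*}|)^{2}$ outright would be incorrect whenever $g(0)\neq 0$ (e.g.\ $g\equiv 1$, $f(t)=t$). Everything else is a routine application of Cauchy--Schwarz together with the spectral theorem, and specializing $f(t)=t^{\alpha}$, $g(t)=t^{1-\alpha}$ recovers Lemma \ref{lem1}.
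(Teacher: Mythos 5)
Your proof is correct. Note that the paper itself does not prove this lemma at all; it is quoted from Kittaneh's 1988 paper \cite{a1}, where the argument is essentially the one you give: polar decomposition $A=U|A|$, the factorization $|A|=f(|A|)g(|A|)$ via continuous functional calculus, Cauchy--Schwarz, and the transfer of the functional calculus from $|A|$ to $|A^*|$. Your handling of the case $g(0)\neq 0$ (splitting off the constant and using $UU^*\leq I$) is a genuine and correctly identified subtlety of the conjugation route, since $U\varphi(|A|)U^*=\varphi(|A^*|)$ only holds for $\varphi(0)=0$. You could shorten this last part by using the intertwining relation instead of conjugation: from $U^*U|A|=|A|U^*U=|A|$ one gets $|A^*|U=U|A|U^*U=U|A|$, hence $|A^*|^nU=U|A|^n$ for all $n\geq 0$ (trivially including $n=0$), and so $\varphi(|A^*|)U=U\varphi(|A|)$ for \emph{every} continuous $\varphi$, with no restriction at the origin. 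Then
\[
\langle Ax,y\rangle=\langle Ug(|A|)f(|A|)x,y\rangle=\langle g(|A^*|)Uf(|A|)x,y\rangle=\langle Uf(|A|)x,g(|A^*|)y\rangle,
\]
and Cauchy--Schwarz together with $\|U\|\leq 1$ gives the claim directly, avoiding the $g(0)$ bookkeeping altogether. Either way, your argument is complete and sound.
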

Finally, we obtain the following inequality involving non-negative continuous functions.
\begin{theorem}
Let $ B,C\in\mathbb{B}(\mathscr{H})$ and let $f,g$ be two non-negative continuous functions on $[0,\infty)$ satisfying $f(t)g(t)=t$ for all $t\in[0,\infty),$  then $$\frac{1}{2}\|B+C\|^2\leq w_e(f^2(|B|),f^2(|C|))w_e(g^2(|B^*|),g^2(|C^*|)).$$

In particular, $$\frac{1}{2}\|B+C\|^2\leq w_e(|B|,|C|)w_e(|B^*|,|C^*|).$$
\end{theorem}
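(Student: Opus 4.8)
The plan is to estimate the operator norm through its bilinear characterization $\|B+C\|=\sup\{|\langle (B+C)x,y\rangle|:\ \|x\|=\|y\|=1\}$. The point is that this form provides \emph{two} independent unit vectors $x$ and $y$: one to feed the $f(|\cdot|)$--side and the other to feed the $g(|\cdot|^{*})$--side of the generalized Cauchy--Schwarz inequality of Lemma \ref{b2}, while each Euclidean operator radius factor only ever sees a single vector.

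First I would fix unit vectors $x,y\in\mathscr{H}$ and apply Lemma \ref{b2} to $B$ and to $C$, obtaining
$$|\langle Bx,y\rangle|\le \|f(|B|)x\|\,\|g(|B^{*}|)y\|,\qquad |\langle Cx,y\rangle|\le \|f(|C|)x\|\,\|g(|C^{*}|)y\|.$$
Adding these, using $|\langle (B+C)x,y\rangle|\le|\langle Bx,y\rangle|+|\langle Cx,y\rangle|$, and then the elementary inequality $(ab+cd)^2\le (a^2+c^2)(b^2+d^2)$ (already used elsewhere in the paper) gives
$$|\langle (B+C)x,y\rangle|\le \big(\|f(|B|)x\|^2+\|f(|C|)x\|^2\big)^{1/2}\big(\|g(|B^{*}|)y\|^2+\|g(|C^{*}|)y\|^2\big)^{1/2}.$$

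Next I would use that $f(|B|)$ is self-adjoint and positive (as $f\ge 0$ on $[0,\infty)$ and $|B|\ge 0$), so that $\|f(|B|)x\|^2=\langle f(|B|)^2x,x\rangle=\langle f^2(|B|)x,x\rangle\ge 0$, and similarly for the three companion terms. Combining this with $s+t\le\sqrt2\,(s^2+t^2)^{1/2}$ for $s,t\ge 0$ and the definition of $w_e$ yields, for the unit vector $x$,
$$\|f(|B|)x\|^2+\|f(|C|)x\|^2\le \sqrt2\,\big(|\langle f^2(|B|)x,x\rangle|^2+|\langle f^2(|C|)x,x\rangle|^2\big)^{1/2}\le \sqrt2\,w_e\big(f^2(|B|),f^2(|C|)\big),$$
and likewise $\|g(|B^{*}|)y\|^2+\|g(|C^{*}|)y\|^2\le \sqrt2\,w_e\big(g^2(|B^{*}|),g^2(|C^{*}|)\big)$. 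Substituting these two bounds into the previous display, squaring, and taking the supremum over all unit $x,y$ turns the left-hand side into $\|B+C\|^2$ and produces $\|B+C\|^2\le 2\,w_e(f^2(|B|),f^2(|C|))\,w_e(g^2(|B^{*}|),g^2(|C^{*}|))$, which is the claim after dividing by $2$. For the particular case one takes $f(t)=g(t)=\sqrt t$: then $f(t)g(t)=t$, both functions are non-negative and continuous on $[0,\infty)$, and $f^2(|B|)=|B|$, $f^2(|C|)=|C|$, $g^2(|B^{*}|)=|B^{*}|$, $g^2(|C^{*}|)=|C^{*}|$.

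I do not expect a genuine obstacle here; the only step that requires a moment of care is the very first one, namely to work from $\|B+C\|=\sup|\langle (B+C)x,y\rangle|$ rather than from $\|(B+C)x\|$ or from $w(B+C)$, because Lemma \ref{b2} is intrinsically a two-vector statement while $w_e$ is a one-vector quantity; matching the two requires splitting the roles of $x$ and $y$. Everything else is a routine chain of Cauchy--Schwarz-type estimates of the kind already carried out in the proofs of Theorems \ref{th1} and \ref{th4}.
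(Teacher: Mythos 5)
Your proposal is correct and follows essentially the same route as the paper: bound $\|B+C\|$ via $\sup_{\|x\|=\|y\|=1}|\langle (B+C)x,y\rangle|$, apply the generalized Cauchy--Schwarz inequality of Lemma \ref{b2} to $B$ and $C$ separately with the two independent unit vectors, rewrite $\|f(|B|)x\|^2=\langle f^2(|B|)x,x\rangle$ (and its companions), decouple the $x$-terms from the $y$-terms by the discrete inequality $(ab+cd)^2\le(a^2+c^2)(b^2+d^2)$, and recognize each factor as a Euclidean operator radius. The only difference is bookkeeping: the paper extracts the factor $2$ at the start via $|a+b|^2\le 2(|a|^2+|b|^2)$, whereas you collect it at the end from two applications of $s+t\le\sqrt{2}\,(s^2+t^2)^{1/2}$, arriving at the same constant and conclusion.
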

\begin{proof}
Let $x,y$ be two unit vectors in $\mathscr{H}$. Then
\begin{eqnarray*}
&& |\langle(B+C)x,y\rangle|^2\\ &=&|\langle Bx,y\rangle+\langle Cx,y\rangle|^2\\&\leq& 2(|\langle Bx,y\rangle|^2+|\langle Cx,y\rangle|^2)\\&\leq& 2(\|f(|B|)x\|^2\|g(|B^*|)y\|^2+\|f(|C|)x\|^2\|g(|C^*|)y\|^2)\,\,(\textit{using Lemma \ref{b2} })\\&=& 2(\langle f^2(|B|)x,x\rangle\langle g^2(|B^*|)y,y\rangle+\langle f^2(|C|)x,x\rangle\langle g^2(|C^*|)y,y\rangle)\\&\leq& 2\left(\langle f^2(|B|)x,x\rangle^2+\langle f^2(|C|)x,x\rangle^2\right)^\frac{1}{2}\left(\langle g^2(|B^*|)x,x\rangle^2+\langle g^2(|C^*|)x,x\rangle^2\right)^\frac{1}{2}\\&\leq& 2 w_e(f^2(|B|),f^2(|C|))w_e(g^2(|B^*|),g^2(|C^*|)).
\end{eqnarray*}
Taking supremum over $\|x\|=\|y\|=1$, we get $$\frac{1}{2}\|B+C\|^2\leq  w_e(f^2(|B|),f^2(|C|))w_e(g^2(|B^*|),g^2(|C^*|)).$$
\end{proof}

In particular, if we take $f(t)=g(t)=t^\frac{1}{2}$, then  $$\frac{1}{2}\|B+C\|^2\leq w_e(|B|,|C|)w_e(|B^*|,|C^*|),$$ as desired.

\bibliographystyle{amsplain}

\end{document}